\definecolor{1f1e33}{HTML}{1F1E33}
\definecolor{mediumblue}{HTML}{0000CD}
\newcommand{\Prop}{\mathbf{Prop}}
\newcommand{\Nom}{\mathbf{Nom}}
\newcommand{\dia}{\diamondsuit}
\newcommand{\Kmodel}{\mathcal{M}}
\newcommand{\Kframe}{\mathcal{F}}
\newcommand{\tableau}{\mathbf{TAB}}
\newcommand{\hybridK}{\mathbf{K(@)}}
\newcommand{\modalIB}{\mathbf{IB}}
\newcommand{\hybridIB}{\mathbf{IB(@)}}
\newcommand{\Drest}{(\mathcal{D})}
\newcommand{\Irest}{(\mathcal{I})}
\newcommand{\dom}{\mathrm{dom}}
\newcommand{\reflexivity}{\mathit{Ref}}
\newcommand{\Id}{\mathit{Id}}
\newcommand{\squaresym}{\square_{\mathit{sym}}}
\newcommand{\tableauIB}{\mathbf{TAB}_{\mathbf{IB}}}
\date{}
\begin{document}
\title{Complete and Terminating Tableau Calculus for Undirected Graph}
%
%

\author{Yuki Nishimura\inst{1}\orcidID{0009-0000-0475-1168} \and
Tsubasa Takagi\inst{2}\orcidID{0000-0001-9890-1015}
}

\institute{Tokyo Institute of Technology, Tokyo, Japan \\ \email{nishimura.y.as@m.titech.ac.jp} \and
Japan Advanced Institute of Science and Technology, Ishikawa, Japan
\email{tsubasa@jaist.ac.jp}
}
\authorrunning{Y. Nishimura \and T. Takagi}
%
%
\maketitle              
\begin{abstract}
Hybrid logic is a modal logic with additional operators specifying nominals and is highly expressive. For example, there is no formula corresponding to the irreflexivity of Kripke frames in basic modal logic, but there is in hybrid logic. Irreflexivity is significant in that irreflexive and symmetric Kripke frames can be regarded as undirected graphs reviewed from a graph theoretic point of view. Thus, the study of the hybrid logic with axioms corresponding to irreflexivity and symmetry can help to elucidate the logical properties of undirected graphs. In this paper, we formulate the tableau method of the hybrid logic for undirected graphs. Our main result is to show the completeness theorem and the termination property of the tableau method, which leads us to prove the decidability.

\keywords{Hybrid logic \and Tableau calculus \and Completeness \and Termination property \and Undirected graph \and Orthoframe.}
\end{abstract}
\section{Introduction}

\subsection*{Background}

Hybrid logic is an extension of basic modal logic with additional propositional symbols called nominals and an operator called a satisfaction operator $@$. A nominal is a formula that is true only in a single possible world in a Kripke model. Nominals are regarded as syntactic names of possible worlds. Hybrid logic is highly expressive because nominals allow arbitrary possible worlds to be designated. Hybrid logic was first invented by Prior \cite{prior1967, prior1968} and has developed in various ways since then. (See \cite{tencate2005, blackburn2006P, areces2007_14, indrzejczak2007, brauner2011}.)

One of the advantages of hybrid logic compared to basic modal logic is that we can treat the irreflexivity of Kripke frames. In basic modal logic, no axiom corresponds to the irreflexivity of the model \cite{goldblatt1987}. In hybrid logic, however, we have a simple axiom, $@_i \neg \dia i$, where $i$ is a nominal \cite{blackburn1999}.

This advantage makes hybrid logic well-suited for dealing with undirected graphs, namely relational structures with irreflexivity and symmetry. There are some logics whose semantics are given by undirected graphs; for example, orthoframes used in (the minimal) quantum logic \cite{goldblatt1974} and social networks used in Facebook logic \cite{seligman2011, seligman2013} can be regarded as undirected graphs.

\subsection*{Motivation}

The motivation of this paper is to find a procedure that can automatically determine the validity of formulas for undirected graphs. In general, tableau calculus gives such a procedure if its completeness and the termination property are shown. That is, the decidability of the validity of formulas follows from the existence of a complete and terminating tableau calculus.

There are some previous research on tableau calculi for hybrid logic. Tableau calculi for hybrid logic was first proposed by \cite{tzakova1999} as a prefixed tableau calculus, which was based on the method of \cite{fitting1983}. Later,  \cite{bolander2007} proposed tableau calculi for hybrid logic in another way, as a labeled sequent calculus. It worked better with hybrid logic than the previous method because we can \emph{internalize} labels owing to the satisfaction operator, and we no longer need labels to construct a proof system. A subsequent paper \cite{bolander2009} of \cite{bolander2007} proposed a tableau calculus for hybrid logic based on $\mathbf{K4}$, and claimed that the loop-checking approach can be extended in it to the hybrid logics based on $\mathbf{S4}$ and $\mathbf{S5}$.

Our study, however, focuses on a combination of axioms corresponding to irreflexivity and symmetry, namely (irr) and (sym) in \cite{bolander2009}. This combination is worth considering from the graph-theoretic point of view because it characterizes undirected graphs as Kripke frames. On the other hand, Bolander and Blackburn \cite{bolander2009} have not dealt with the combination of (irr) and (sym). The reason is that they considered it challenging: in \cite{bolander2009}, they wrote, ``But then we are faced with the task of combining such conditions as (irr), (sym), (asym), (antisym), (intrans), (uniq) and (tree) with (trans), and here matters are likely to be much trickier.''

\subsection*{Results}

With the above motivation, we propose a complete and terminating tableau calculus with respect to the class of irreflexive and symmetric frames: its completeness is shown in Theorem \ref{thmcompleteIB}, and its termination property is shown in Theorem \ref{cordecidable}. For this purpose, we show key lemmas called the model existence lemma (Lemma \ref{lemmodelexistIB}) and the bulldozing lemma (Lemma \ref{lembulldozepreserve}).

To address a hybrid logic with the axioms (irr) and (sym), we employ the bulldozing method, which was originally proposed in \cite{segerberg1971}. (See also \cite{hughes1996}.) Blackburn \cite{blackburn1990} used the bulldozing method in proving the completeness of hybrid logic with the axiom (irr). Our work shows that the bulldozing method also works in dealing with the tableau calculus for hybrid logic with the rule of irreflexivity $\Irest$ in Section \ref{secsyntaxIB}.

Our work can be regarded as an extension of \cite{blackburn1990} in the following sense. The paper \cite{blackburn1990} showed that none of the reflexive points in the model created by a maximal consistent set has a nominal and checked that there is no matter to apply bulldozing. In our work, however, all the models created by an open saturated branch are named --- each world makes at least one nominal true. We show that this situation causes no problem since all the nominals true in reflexive worlds are not an element that determines the validity of a formula but just ``labels'' to point to worlds.

\subsection*{Related Work}

Examples of previous study on graph theory using hybrid logic can be found in \cite{benevides2009, benevides2011, gate2013}. They studied hybrid graph logic, a hybrid logic with a special operator $\dia^+$, as a tool for analyzing some properties of graph theory. In particular, for undirected graphs, \cite{takeuti2020} discussed how to express the planarity of undirected graphs. (Note that \cite{takeuti2020} also showed the decidability of hybrid logic for undirected graphs.)

All of these studies adopted Hilbert-style axiomatization as their proof system. On the other hand, we adopt tableau calculus as a proof system. As is well known, tableau calculi have a clear advantage compared to Hilbert-style axiomatizations in that the proof construction can be automated.

\subsection*{Organization of the Paper}

The rest of this paper is organized as follows. Section \ref{secsemantics} introduces the semantics of hybrid logic, and Section \ref{secsyntaxK} shows the tableau system of basic hybrid logic. After that, we introduce the tableau calculus corresponding to undirected graphs in Section \ref{subsecconstruct}, and then we show the termination property and the completeness theorem in Section \ref{subsecterm} and \ref{subseccomp}, respectively. Finally, in Section \ref{secconclusion}, we summarize the main result of the paper and propose future work.

\section{Preliminaries}
\subsection{Kripke Semantics of Hybrid Logic}
\label{secsemantics}

Here, we review a hybrid logic with an operator $@$. (See \cite{brauner2011} for more details.)

\begin{definition}[language]
  We have a countably infinite set $\Prop$ of \emph{propositional variables} and another countably infinite set $\Nom$ of \emph{nominals}, which is disjoint from $\Prop$. A \emph{formula} $\varphi$ of hybrid logic is defined inductively as follows:
  \begin{align*}
    \varphi \Coloneqq p \mid i \mid \neg \varphi \mid \varphi \land \varphi \mid \diamondsuit \varphi \mid @_i \varphi,
  \end{align*}
  where $p \in \Prop$ and $i \in \Nom$.

  We use the following abbreviations:
  \begin{align*}
    \varphi \lor \psi \coloneqq \neg(\neg \varphi \land \neg \psi), \ \varphi \rightarrow \psi \coloneqq \neg(\varphi \land \neg \psi), \ \square \varphi \coloneqq \neg \diamondsuit \neg \varphi.
  \end{align*}
\end{definition}

\begin{definition}
  A \emph{Kripke model} (we call it \emph{model}, in short) $\Kmodel = (W, R, V)$ is defined as follows:
  \begin{itemize}
    \item $W$ is a non-empty set,
    \item $R$ is a binary relation on $W$,
    \item $V$ is a function $V:\Prop \ \cup \ \Nom \to \mathcal{P}(W)$ such that $V(i) = \{ w \} \ \text{for some} \ w \in W$ for each $i \in \Nom$, where $\mathcal{P}(W)$ denotes the powerset of $W$.
  \end{itemize}

  Furthermore, we call $\Kframe = (W, R)$ that satisfies the conditions above a \emph{Kripke frame} (or shortly \emph{frame}).
\end{definition}

This definition reflects the property of nominals: one nominal is true in only one world. In this paper, we write $w R v$ to mean $(w, v) \in R$. Moreover, We abbreviate $w \in W$ such that $V(i) = \{ w \}$ by $i^V$.

\begin{definition}
  Given a model $\Kmodel$, a possible world $w$ in $\Kmodel$, and a formula $\varphi$, the \emph{satisfaction relation} $\Kmodel, w \models \varphi$ is defined inductively as follows:
  \begin{align*}
    \Kmodel, w \models p &\iff w \in V(p), \text{where} \ p \in \Prop; \\
    \Kmodel, w \models i &\iff w = i^V, \text{where} \ i \in \Nom; \\
    \Kmodel, w \models \neg \varphi &\iff \text{not} \ \Kmodel, w \models \varphi; \\
    \Kmodel, w \models \varphi \land \psi &\iff \Kmodel, w \models \varphi \ \text{and} \ \Kmodel, w \models \psi; \\
    \Kmodel, w \models \diamondsuit \varphi &\iff \text{there exists} \ v \ \text{such that} \ w R v \ \text{and} \ \Kmodel, v \models \varphi; \\
    \Kmodel, w \models @_i \varphi &\iff \Kmodel, i^V \models \varphi.
  \end{align*}
  \label{defsatis}
\end{definition}

\begin{definition}
    A formula $\varphi$ is said to be \emph{valid} (denoted as $\models \varphi$) if $\Kmodel, w \models \varphi$ holds for all models $\Kmodel$ and all of its worlds $w$.
    \label{defvalid}
\end{definition}

\subsection{Tableau Calculus for $\hybridK$}
\label{secsyntaxK}

$\hybridK$ is an axiomatization of hybrid logic with an operator $@$, based on the minimal normal modal logic $\mathbf{K}$. (See \cite{blackburn2006P}, for example.) We write $\tableau$ to indicate the tableau calculus for hybrid logic $\hybridK$. This section is based on \cite{bolander2007}.

\begin{definition}
  A \emph{tableau} is a well-founded tree constructed in the following way:
  \begin{itemize}
    \item Start with a formula of the form $@_i \varphi$ (called the \emph{root formula}), where $\varphi$ is a formula of hybrid logic and $i \in \Nom$ does not occur in $\varphi$.
    \item For each branch, extend it by applying rules (see Definition \ref{defrules}) to all nodes as often as possible. However, we can no longer add any formula in a branch if at least one of the following conditions is satisfied:
    \begin{enumerate}[i)]
      \item Every new formula generated by applying any rule already exists in the branch.
      \item The branch is closed. (See Definition \ref{defclose}.)
    \end{enumerate}
  \end{itemize}
  Here, a \emph{branch} means a maximal path of a tableau. If a formula $\varphi$ occurs in a branch $\Theta$, we write $\varphi \in \Theta$.
  \label{deftableau}
\end{definition}

\begin{definition}
    A branch of a tableau $\Theta$ is \emph{closed} if there exists a formula $\varphi$ and a nominal $i$ such that $@_i \varphi, @_i \neg \varphi \in \Theta$. We say that $\Theta$ is \emph{open} if it is not closed. A tableau is called \emph{closed} if all branches in the tableau are closed.
    \label{defclose}
\end{definition}

\begin{definition}
  We provide the rules of $\tableau$ in Figure \ref{figrules}.
  \begin{figure}[t]
  \begin{gather*}
\infer[{[\neg]}]
    {@_j j}
    {@_i \neg j}
\qquad
\infer[{[\neg \neg]}]
    {@_i \varphi}
    {@_i \neg \neg \varphi}
\qquad
\infer[{[\land]}]
    {\deduce{@_i \psi} {@_i \varphi}}
    {@_i (\varphi \land \psi)}
\qquad
\infer[{[\neg \land]}]
    {@_i \neg \varphi \mid @_i \neg \psi}
    {@_i \neg (\varphi \land \psi)} \\
\\
\infer[{[\diamondsuit]}^{*1, *2, *3}]
    {\deduce{@_j \varphi}{@_i \diamondsuit j}}
    {@_i \diamondsuit \varphi}
\qquad
\infer[{[\neg \diamondsuit]}]
    {@_j \neg \varphi}
    {\deduce{@_i \diamondsuit j}{@_i \neg \diamondsuit \varphi}}
\qquad
\infer[{[@]}]
    {@_j \varphi}
    {@_i @_j \varphi}
\qquad
\infer[{[\neg @]}]
    {@_j \neg \varphi}
    {@_i \neg @_j \varphi}
\qquad
\infer[{[\Id]}^{*3}]
    {@_j \varphi}
    {\deduce{@_i j} {@_i \varphi}}
  \end{gather*}

  *1: $j \in \Nom$ does not occur in the branch.

  *2: This rule can be applied only one time per formula.

  *3: The formula above the line is not an accessibility formula. Here, an \emph{accessibility formula} is the formula of the form $@_i \dia j$ generated by [$\dia$], where $j$ is a new nominal. \vspace{\baselineskip} \\
  In these rules, the formulas above the line show the formulas that have already occurred in the branch, and the formulas below the line show the formulas that will be added to the branch. The vertical line in the $[\neg \land]$ means that the branch splits to the left and right.
  \caption{The rules of $\tableau$}
  \label{figrules}
  \end{figure}
  \label{defrules}
\end{definition}

\begin{definition}[probability]
    Given a formula $\varphi$, we say that $\varphi$ is \emph{provable} in $\tableau$ if there is a closed tableau whose root formula is $@_i \neg \varphi$, where $i \in \Nom$ does not occur in $\varphi$.
\end{definition}

Bolandr and Blackburn \cite{bolander2007} showed two significant properties of $\tableau$. One is the termination property, and the other is completeness.

\begin{theorem}[termination]
  The tableau calculus $\tableau$ has the termination property. That is, for every tableau in $\tableau$, all branches of it are finite.
  \label{thmterm}
\end{theorem}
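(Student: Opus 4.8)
The plan is to show that every branch of a tableau with root formula $@_i\varphi$ can draw its formulas only from one fixed finite set, and then to conclude: by Definition~\ref{deftableau} a branch is extended only by a rule application that adds to it a formula not already occurring on it, so the length of a branch is bounded by the size of that finite set. The entire difficulty is therefore concentrated in producing the finite bound on the formulas that can appear.

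First I would isolate two structural invariants, each proved by induction on the construction of a branch, inspecting every rule of Figure~\ref{figrules}. The \emph{shape invariant}: every formula on the branch has the form $@_j\psi$, where $j$ is a nominal occurring on the branch and $\psi$ is a nominal or a subformula of $\varphi$; moreover, any nominal occurring \emph{inside} an $@$-subformula of a branch formula already occurs in $\varphi$. (The second clause is what guarantees that $[@]$, $[\neg@]$, $[\Id]$ and $[\neg]$ never produce a fresh prefix.) Since the set of subformulas of $\varphi$ is finite, it remains only to bound the set of nominals on the branch. These are of two kinds: the finitely many nominals already in $\varphi$, and the \emph{fresh} nominals introduced by $[\dia]$. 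To control the latter I would arrange them into a forest whose roots are the nominals of $\varphi$: whenever $[\dia]$ is applied to $@_j\dia\psi$ and introduces a fresh nominal $k$ together with the accessibility formula $@_j\dia k$ and the formula $@_k\psi$, declare $k$ a child of $j$.

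Two facts about this forest finish the argument. It is \emph{finitely branching}: by the side condition $*2$ each formula $@_j\dia\psi$ on the branch triggers $[\dia]$ at most once, and by the shape invariant there are at most $|\mathrm{sub}(\varphi)|$ candidate formulas $@_j\dia\psi$ for a given $j$. And it has \emph{bounded depth}: writing $\mathrm{md}$ for the maximal nesting of $\dia$, I would prove by induction on depth that every branch formula $@_j\psi$ with $j$ at depth $d$ satisfies $\mathrm{md}(\psi)\le\mathrm{md}(\varphi)-d$. The base case is the shape invariant; in the inductive step, the edge from $j$ to a child $k$ strips one $\dia$, so $@_k\psi$ arrives with $\mathrm{md}(\psi)\le\mathrm{md}(\varphi)-(d+1)$, while every other rule whose conclusion is prefixed by $k$ only re-uses subformulas and so cannot raise this modal depth. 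The side condition $*3$ is indispensable precisely here: it forbids applying $[\dia]$ (and $[\Id]$) to the accessibility formula $@_j\dia k$, which would otherwise spawn descendants of $k$ without decreasing modal depth. Hence the forest has depth at most $\mathrm{md}(\varphi)$; being finitely branching and of bounded depth, it is finite, so only finitely many nominals occur on the branch. Consequently the set of formulas $@_j\psi$ with $j$ a nominal on the branch and $\psi$ a nominal or a subformula of $\varphi$ is finite and, by the shape invariant, contains every formula on the branch, which is therefore finite.

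I expect the main obstacle to be the second clause of the shape invariant together with the modal-depth induction: one must verify, rule by rule, that no rule other than $[\dia]$ ever creates a fresh prefix, and that no rule raises the modal depth attached to a prefix — in particular that the accessibility formulas $@_j\dia k$ are genuinely inert under $[\dia]$ and $[\Id]$ by virtue of the side condition $*3$. Once these invariants are established, the counting step and the appeal to finiteness of a bounded, finitely branching forest are routine.
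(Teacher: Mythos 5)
The paper does not prove Theorem~\ref{thmterm} itself but imports it from \cite{bolander2007}; your argument is essentially the one given there, and it is also the skeleton the paper reuses for $\tableauIB$ in Section~\ref{subsecterm} (your generation forest is exactly $G=(N^\Theta,\prec_\Theta)$ of Lemma~\ref{lemG}, and your ``finitely branching plus bounded depth'' step plays the role that the twins/pigeonhole argument plays once $[\squaresym]$ destroys the modal-depth decrease). Two points need tightening before the counting goes through. First, your shape invariant is false as literally stated: $[\neg\land]$, $[\neg\dia]$ and $[\neg @]$ produce \emph{negations} of subformulas of $\varphi$, and the accessibility formulas $@_j\dia k$ contain fresh nominals, so neither is ``a nominal or a subformula of $\varphi$''; the invariant must be the three-way disjunction that the paper later proves as Lemma~\ref{lemquasi} (quasi-subformula of the root, i.e.\ closed under one negation, or accessibility formula). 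This does not change the finiteness count. Second, ``no rule raises the modal depth attached to a prefix'' is not quite the right claim: $[@]$ and $[\Id]$ can attach a formula of large modal depth to a prefix that previously carried only shallow ones. What actually saves the depth induction is the auxiliary invariant that every branch formula of the form $@_a b$ with $b$ a bare nominal has $b$ occurring in the root formula or $b=a$ (no rule ever produces $@_a b$ with $b$ a generated nominal distinct from $a$), so $[@]$ and $[\Id]$ only ever relocate formulas to depth-$0$ nominals, where the bound $\mathrm{md}(\varphi)-0$ holds trivially; $[\neg\dia]$ is covered separately since it strips one $\dia$ whether its target is a child or a root nominal. With those two repairs your bounded, finitely branching forest argument is sound and yields the finiteness of every branch.
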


\begin{theorem}[completeness]
  The tableau calculus $\tableau$ is complete for the class of all frames.
  \label{thmcomplete}
\end{theorem}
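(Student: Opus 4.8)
The plan is to argue by contraposition: I will show that if $\varphi$ is not provable in $\tableau$, then $\varphi$ is not valid. So assume there is no closed tableau whose root formula is $@_i \neg \varphi$, where $i \in \Nom$ does not occur in $\varphi$. Build a tableau from $@_i \neg \varphi$ by applying the rules of Figure \ref{figrules} as often as possible; by the termination property (Theorem \ref{thmterm}) this process halts. Since the resulting tableau is not closed, it has an open branch $\Theta$ on which every rule that could be applied has already been applied (every new formula it would generate already occurs in $\Theta$); call such a $\Theta$ \emph{saturated}. Note $@_i \neg \varphi \in \Theta$. From $\Theta$ I will build a counter-model to $\varphi$.

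For the construction, let $N$ be the set of nominals occurring in $\Theta$ and define $i \sim j$ iff $@_i j \in \Theta$. Using $[\neg]$, $[\Id]$, $[@]$ and $[\neg@]$ together with saturation, one checks that $\sim$ is an equivalence relation on $N$ (reflexivity because $@_j j$ is forced to appear for each relevant $j$; symmetry and transitivity from $[\Id]$). Set $W := \{\, [i] : i \in N \,\}$, which is non-empty since $i \in N$; define $[i]\, R\, [j]$ iff $@_i \dia j \in \Theta$; and define $V(p) := \{\, [i] : @_i p \in \Theta \,\}$ for $p \in \Prop$, $V(j) := [j]$ for $j \in N$, and $V(j) := [i]$ for $j \notin N$. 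The rules $[\Id]$ and $[\neg \dia]$, which let one transfer atomic and relational facts between $\sim$-equivalent nominals, ensure that $R$ and $V$ are well defined on equivalence classes, so $\Kmodel^\Theta := (W, R, V)$ is a model in the sense of the semantics.

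The core step is the truth lemma: for every formula $\psi$ and every $i \in N$, (a) if $@_i \psi \in \Theta$ then $\Kmodel^\Theta, [i] \models \psi$, and (b) if $@_i \neg \psi \in \Theta$ then $\Kmodel^\Theta, [i] \not\models \psi$. This is proved by induction on the structure of $\psi$. The atomic and Boolean cases use $[\neg\neg]$, $[\land]$, $[\neg\land]$ and the definitions; the nominal and satisfaction-operator cases use $[\neg]$, $[@]$, $[\neg@]$ and the definition of $\sim$. The $\dia$ case carries the weight: if $@_i \dia \psi \in \Theta$, saturation of $[\dia]$ provides a nominal $j$ with $@_i \dia j, @_j \psi \in \Theta$, hence $[i]\, R\, [j]$ and, by the induction hypothesis, $\Kmodel^\Theta, [j] \models \psi$; if instead $@_i \neg \dia \psi \in \Theta$ and $[i]\, R\, [j]$, then $@_i \dia j \in \Theta$ by definition of $R$, so $[\neg\dia]$ forces $@_j \neg \psi \in \Theta$ and the induction hypothesis gives $\Kmodel^\Theta, [j] \not\models \psi$. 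Instantiating the lemma at the root formula $@_i \neg \varphi \in \Theta$ yields $\Kmodel^\Theta, [i] \not\models \varphi$, so $\varphi$ is not valid, which completes the contrapositive.

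I expect the main obstacle to lie not in the individual inductive cases but in the infrastructure around $\sim$: verifying that it is genuinely an equivalence relation and that $R$ and $V$ descend to the quotient, and --- the most delicate point --- checking that the $\dia$ case of the truth lemma is compatible with the side conditions on the $[\dia]$ rule, in particular that on a saturated branch the witnessing nominal is always present and that the exclusion of accessibility formulas from $[\Id]$ does not break the transfer of information used above. The remaining cases are routine once these points are settled.
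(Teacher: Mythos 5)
Your overall strategy---contraposition, an open saturated branch obtained via Theorem \ref{thmterm}, a term model built over the nominals of the branch, and a one-directional truth lemma---is exactly the standard one: it is the route of the source the paper cites for this theorem, and the one the paper itself mirrors for $\tableauIB$ in Lemma \ref{lemmodelexistIB}. The gap is in your model construction. First, the relation you quotient by, $i \sim j$ iff $@_i j \in \Theta$, is not an equivalence relation on a saturated branch of $\tableau$: the calculus has no rule producing $@_j j$ unconditionally ($[\reflexivity]$ belongs only to $\tableauIB$; in $\tableau$ the formula $@_j j$ arises only from $[\neg]$ applied to some $@_k \neg j$), so $\sim$ need not be reflexive, and since $[\Id]$ only pushes information from $i$ to $j$ when $@_i j \in \Theta$, it need not be symmetric either ($@_j i$ follows from $@_i j$ only if $@_i i$ is already present). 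Passing to the generated equivalence relation does not immediately repair this: two nominals in the same generated class are joined by a zig-zag of one-directional $@$-links, and showing that they carry compatible information (so that $V$ descends to the quotient and the $\neg p$, $\neg j$ and $\neg\dia$ cases of the truth lemma go through) needs a directedness lemma---roughly, that $@_a b, @_a c \in \Theta$ forces $@_b c \in \Theta$ and hence a most-informed element in each class---which you have neither stated nor proved.

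Second, and this is the point you yourself flag as ``delicate'' but then assert away: side condition $*3$ forbids applying $[\Id]$ to accessibility formulas, so positive relational facts $@_i \dia j$ do \emph{not} transfer between $\sim$-related nominals. Hence $R$ is not well defined on classes by ``$[i]\,R\,[j]$ iff $@_i \dia j \in \Theta$''; you must either read this existentially over representatives (and then redo the $\neg\dia$ case, which requires transferring $@_i \neg\dia\psi$ across a class, i.e.\ the missing directedness lemma again) or abandon the quotient. The construction the paper relies on avoids all of this by never quotienting: it selects a canonical representative for each nominal (an urfather, refined in this paper to the identity urfather $v_\Theta$ of Definition \ref{defiur}), takes only these representatives as worlds, and proves the truth lemma in the one-directional form ``$@_i \varphi \in \Theta$ implies truth at $v_\Theta(i)$''. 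Your inductive cases are fine once that infrastructure is in place, but as written the infrastructure is precisely what is missing.
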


\section{Tableau Calculus for $\hybridIB$}
\label{secsyntaxIB}

In this paper, we discuss the hybrid logic corresponding to irreflexive and symmetric models. We call the models \emph{orthomodels}, following \cite{goldblatt1974}.

\begin{definition}
  An \emph{orthoframe} is a Kripke frame $\Kframe = (W, R)$, where $R$ is irreflexive and symmetric. An \emph{orthomodel} is $\Kmodel = (\Kframe, V)$, where $\Kframe$ is an orthoframe.
\end{definition}

$\hybridIB$ is a hybrid logic corresponding to the class of all orthoframes. We can construct it by adding the following two axioms to the hybrid logic $\hybridK$:
\begin{align*}
  (irr): \ &@_i \neg \dia i, 
  (sym): \ &@_i \square \dia i.
\end{align*}

\subsection{Constructing Tableau Calculus}
\label{subsecconstruct}

Our aim is to construct a tableau calculus corresponding to $\hybridIB$, keeping the termination property. First, we add the following rule that reflects the symmetry:

\[
  \infer[{[\squaresym]}]
    {@_j \varphi}
    {\deduce{@_j \dia i}{@_i \square \varphi}} .
\]

However, this rule allows us to build an infinite branch.

\begin{example}
    Consider an example of Figure \ref{tabsym}. (Formulas with ${}^*$ are accessibility formulas.) In this branch, nominals are generated infinitely as $i_1, i_3, i_5, \ldots$ and $i_2, i_4, i_6, \ldots$.
\end{example}

\begin{figure}[t]
  \begin{align*}
    &1.\,@_{i_0} (\dia \dia i \land @_i \square \dia \dia i) & \\
    &2.\,@_{i_0} \dia \dia i &(1) \\
    &3.\,@_{i_0} @_i \square \dia \dia i &(1) \\
    &4.\,{@_{i_0} \dia i_1}^* &(2) \\
    &5.\,@_{i_1} \dia i &(2) \\
    &6.\,@_i \square \dia \dia i &(3) \\
    &7.\,{@_{i_1} \dia i_2}^* &(5) \\
    &8.\,@_{i_2} i &(5) \\
    &9.\,@_{i_1} \dia \dia i &(5, 6, [\squaresym]) \\
    &10.\,@_i i &(8) \\
    &11.\,{@_{i_1} \dia i_3}^* &(9) \\
    &12.\,@_{i_3} \dia i &(9) \\
    &13.\,{@_{i_3} \dia i_4}^* &(12) \\
    &14.\,@_{i_4} i &(12) \\
    &15.\,@_{i_3} \dia \dia i &(6, 12, [\squaresym]) \\
    &16.\,{@_{i_3} \dia i_5}^* &(15) \\
    &17.v@_{i_5} \dia i &(15) \\
    &18.\,{@_{i_5} \dia i_6}^* &(17) \\
    &19.\,@_{i_6} i &(17) \\
    &20.\,@_{i_5} \dia \dia i &(6, 17, [\squaresym]) \\
    &\qquad \vdots
  \end{align*}
  \caption{Non-terminating tableau of $\tableau$ with only $[\squaresym]$}
  \label{tabsym}
\end{figure}

In this case, the nominals $i_1, i_3, i_5, \ldots$ play the same role. For any nominal $i_{2n + 1} \ (n \geq 0)$ occurring in $\Theta$, we have $@_{i_{2n+1}} \dia \dia i, @_{i_{2n+1}} \dia i \in \Theta$. Then we have to prohibit the creation of such redundant nominals.

The solution to this problem has already been proposed in \cite[Section 5.2]{bolander2007} and \cite[Section 7]{bolander2009}. That is to add this restriction $\Drest$ to the tableau calculus.

\begin{description}
  \item[$\Drest$] The rule $[\dia]$ can only be applied to a formula $@_i \dia \varphi$ on a branch $\Theta$ if $i$ is a quasi-urfather.
\end{description}

We will give a precise definition of \emph{quasi-urfather} later. Intuitively, the quasi-urfather is the representative of the aforementioned redundant nominals.

The next thing to do is the addition of rules that address irreflexivity. The solution, which is quite similar to that of \cite{bolander2009}, is straightforward: we add the formula of the form $@_i \neg \dia i$ to the branch for every nominal $i$.

\begin{description}
  \item[$\Irest$] For any nominal $i$ occurring in $\Theta$, we add a formula $@_i \neg \dia i$.
\end{description}

\begin{example}
  Consider an example of Figure \ref{tabirr}. The branch with the root formula $@_i \dia (i \land p)$ is closed owing to $\Irest$. Indeed, the formula  $@_i \dia (i \land p)$ never holds in any orthomodel, i.e. $\neg @_i \dia (i \land p)$ is a theorem of $\hybridIB$.
\end{example}

\begin{figure}[t]
  \begin{align*}
    &1.\,@_i \dia (i \land p) & \\
    &2.\,@_i \neg \dia i &\Irest \\
    &3.\,{@_i \dia j}^* &(1) \\
    &4.\,@_j (i \land p) &(1) \\
    &5.\,@_j \neg \dia j &\Irest \\
    &6.\,@_j \neg i &(2, 3) \\
    &7.\,@_j i &(4) \\
    &8.\,@_j p &(4) \\
    &\quad \text{\ding{55}}
  \end{align*}
  \caption{A closed tableau owing to $\Irest$}
  \label{tabirr}
\end{figure}

Finally, we add a new rule $[\reflexivity]$. In this rule, $i \in \Nom$ must have already occurred in the branch. Note that if we add this rule to our tableau calculus, we no longer need $[\neg]$.
\[
    \infer[{[\reflexivity]}]
        {@_i i}
        { }
\]

Now we are ready to construct the tableau calculus for $\hybridIB$.

\begin{definition}
  $\tableau_{\hybridIB}$ (or simply $\tableau_{\modalIB}$) is the tableau calculus made by adding $[\squaresym], \Drest, \text{ and } \Irest$ to $\tableau$ and replacing $[\neg]$ with $[\reflexivity]$. Moreover, if there is a closed tableau in $\tableauIB$ whose root formula is $@_i \neg \varphi$, where $i \in \Nom$ does not occur in $\varphi$, we say that $\varphi$ is \emph{provable} in $\tableau_{\modalIB}$.
  \label{deftableauIB}
\end{definition}

\subsection{Termination}
\label{subsecterm}

A tableau calculus has the \emph{termination property} if, for every tableau, all branches in it are finite.

The first thing we do is to insert a relation between nominals occurring in a branch of a tableau. It is the case that some nominals that exist in a branch play similar roles. We need to make them be grouped together as \emph{twins}.

First, we show the quasi-subformula property. $\tableauIB$ does not have the subformula property that tableau calculus of propositional logic has. However, by extending the concept of subformula, we can prove a similar property.

\begin{definition}[quasi-subformula]
  Given two formulas of the form $@_i \varphi$ and $@_j \psi$, $@_i \varphi$ is a \emph{quasi-subformula} of $@_j \psi$ if one of the following conditions holds:
  \begin{itemize}
    \item $\varphi$ is a subformula of $\psi$.
    \item $\varphi$ has the form $\neg \chi$, and $\chi$ is a subformula of $\psi$.
  \end{itemize}
\end{definition}

\begin{lemma}
  Let $\Theta$ be a branch of a tableau. For every formula of the form $@_i \varphi$ in $\Theta$, one of the following conditions holds.
  \begin{itemize}
      \item It is a quasi-subformula of the root formula of $\Theta$.
      \item It is an accessibility formula.
      \item It is a quasi-subformula of $@_i \neg \dia i$ for some $i$ occurring in $\Theta$.
  \end{itemize}
  \label{lemquasi}
\end{lemma}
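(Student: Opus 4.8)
The plan is to prove Lemma~\ref{lemquasi} by induction on the construction of the branch $\Theta$, i.e.\ on the order in which formulas are added to $\Theta$. The base case concerns the root formula itself, which is trivially a quasi-subformula of the root (take $\varphi$ a subformula of $\psi$ with $\varphi = \psi$). For the inductive step, I would fix a formula $@_i \varphi$ that is added to $\Theta$ by some rule application and argue that it falls into one of the three listed categories, using the induction hypothesis on the premise formula(s) of that rule and the fact that each rule of $\tableauIB$ either preserves membership in the quasi-subformula closure of the root, produces an accessibility formula, or produces a formula covered by the $@_i\neg\dia i$ clause.

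The core of the argument is a case analysis over the rules of $\tableauIB$ (Figure~\ref{figrules} together with $[\squaresym]$, $[\reflexivity]$, and the formulas injected by $\Irest$). I would group the cases as follows. First, the Boolean and $@$-rules ($[\neg\neg]$, $[\land]$, $[\neg\land]$, $[@]$, $[\neg@]$): here the conclusion's inner formula is (up to a single negation) a subformula of the premise's inner formula, so if the premise is a quasi-subformula of a formula $\chi$, then so is the conclusion --- the "single extra negation" is absorbed by the definition of quasi-subformula, which is exactly why that definition is set up with the $\neg\chi$ clause. Second, the modal rules $[\dia]$, $[\neg\dia]$, $[\squaresym]$: these may introduce a nominal $j$, but the conclusion is still either an accessibility formula (the side formula $@_i\dia j$ produced by $[\dia]$, which is the canonical accessibility formula), or of the form $@_j\varphi$ / $@_j\neg\varphi$ where $\varphi$ is a subformula of something already classified, so by the induction hypothesis it stays in the same class (note that the "$@_i$" in the statement's third bullet is the inner subscript of the conclusion, not the premise, so a change of label is harmless). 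Third, $[\Id]$: it rewrites the label of an already-present formula, so the inner formula is unchanged and the classification is inherited directly. Fourth, $[\reflexivity]$ adds $@_i i$, and $i$ is a subformula of $\dia i$, hence $@_i i$ is a quasi-subformula of $@_i \neg\dia i$, landing in the third bullet. Finally, $\Irest$ adds $@_i\neg\dia i$ itself, which is literally a quasi-subformula of $@_i\neg\dia i$ (third bullet, with $i$ the nominal in question).

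The main obstacle I anticipate is bookkeeping around the accessibility formulas and the interaction between the $[\dia]$-generated side formula $@_i\dia j$ and the later rules ($[\neg\dia]$ and $[\squaresym]$) that fire on such formulas: I must check that whenever a rule takes an accessibility formula as (part of) its premise, the restriction $*3$ or the shape of the rule guarantees the conclusion is again classifiable. Concretely, $[\neg\dia]$ with premises $@_i\neg\dia\varphi$ and $@_i\dia j$ yields $@_j\neg\varphi$; here $\varphi$ is a subformula of the inner formula of $@_i\neg\dia\varphi$, so by the induction hypothesis applied to that \emph{non-accessibility} premise the conclusion inherits its class. Similarly for $[\squaresym]$ with premises $@_i\square\varphi$ and $@_j\dia i$: the conclusion $@_j\varphi$ is governed by the class of $@_i\square\varphi$. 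The restriction $*3$ ensures $[\Id]$ and $[\dia]$ are never applied to an accessibility formula, which prevents the pathological case of an accessibility formula spawning a new nominal chain that escapes all three classes --- this is precisely the place where one must be careful, and it is also the structural reason the quasi-subformula property (and ultimately termination) holds.
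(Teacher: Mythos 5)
Your proposal is correct and follows essentially the same route as the paper: induction on the number of rule applications with a case analysis over the rules, where $[\dia]$'s side formula is classified as an accessibility formula, the negation-producing rules are absorbed by the $\neg\chi$ clause of the quasi-subformula definition, and $[\reflexivity]$ and $\Irest$ land in the $@_i\neg\dia i$ clause. You cover more cases explicitly (and more carefully, e.g.\ the label changes under $[\Id]$ and the non-accessibility premise governing $[\neg\dia]$ and $[\squaresym]$) than the paper, which only writes out $[\dia]$, $[\neg\dia]$, and $[\reflexivity]$.
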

\begin{proof}
    By induction on the number of applied rules. We provide the proof only for the cases [$\dia$], $[\neg \dia]$, and $[\reflexivity]$. The other cases are left to the reader.
    \begin{description}
        \item[{$[\dia]$}] Since $@_i \dia j$ is an accessibility formula, it suffices to show that $@_j \varphi$ is a quasi-subformula of the root formula. $@_j \varphi$ is a quasi-subformula of $@_i \dia \varphi$, and by the induction hypothesis, $@_i \dia \varphi$ is a quasi-subformula of the root formula.
        \item[{$[\neg \dia]$}] $@_j \neg \varphi$ is a quasi-subformula of $@_i \neg \dia \varphi$. By the induction hypothesis, $@_i \neg \dia \varphi$ is either a quasi-subformula of the root formula or the formula $@_i \neg \dia i$. Therefore, $@_j \neg \varphi$ is a quasi-subformula of either the root formula or $@_i \neg \dia i$.
        \item[{$[\reflexivity]$}] The lemma straightforwardly follows, because $@_i i$ is a quasi-subformula of $@_i \neg \dia i$.
    \end{description}
\end{proof}

Next, we define the set $T^\Theta(i)$, a set of true formulas in the world pointed to by the nominal $i$.

\begin{definition}
  Let $\Theta$ be a branch of a tableau. For every nominal $i$ occurring in $\Theta$, the set $T^\Theta(i)$ is defined as follows:
  \[
    T^\Theta(i) = \{ \varphi \mid @_i \varphi \in \Theta \text{ and } @_i \varphi \text{ is a quasi-subformula of the root formula}\}.
  \]
  \label{defT}
\end{definition}

\begin{lemma}
  For all $i$ occurring in $\Theta$, $T^\Theta(i)$ in Definition \ref{defT} is finite.
\end{lemma}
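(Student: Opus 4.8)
The plan is to observe that, by its very definition, $T^\Theta(i)$ is contained in a fixed finite set that depends only on the root formula of $\Theta$, and hence is finite --- uniformly in $i$.

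First I would fix the root formula of $\Theta$, say $@_{i_0}\chi$, and let $\mathrm{Sub}(\chi)$ denote the set of all subformulas of $\chi$. A routine induction on the structure of $\chi$ shows that $\mathrm{Sub}(\chi)$ is finite; indeed, its cardinality is bounded by the length of $\chi$. Next I would unfold the definition of quasi-subformula: if $@_i\varphi$ is a quasi-subformula of $@_{i_0}\chi$, then either $\varphi \in \mathrm{Sub}(\chi)$, or $\varphi$ has the form $\neg\theta$ with $\theta \in \mathrm{Sub}(\chi)$. Consequently every $\varphi$ with $@_i\varphi \in \Theta$ and $@_i\varphi$ a quasi-subformula of the root formula lies in the set
\[
  S_\chi \;=\; \mathrm{Sub}(\chi) \,\cup\, \{\neg\theta \mid \theta \in \mathrm{Sub}(\chi)\}.
\]
Since $|S_\chi| \le 2\,|\mathrm{Sub}(\chi)|$, the set $S_\chi$ is finite, and the inclusion $T^\Theta(i) \subseteq S_\chi$ yields the claim at once.

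There is no real obstacle here; the only subtlety worth flagging is that the notion of quasi-subformula prepends at most a single layer of negation --- it is not itself closed under taking subformulas again --- so $S_\chi$ genuinely remains finite rather than accumulating nested negations. I would also emphasize that the bound $2\,|\mathrm{Sub}(\chi)|$ is independent both of the branch $\Theta$ and of the particular nominal $i$ occurring in it, which is precisely the uniformity that the termination argument in Section~\ref{subsecterm} relies on.
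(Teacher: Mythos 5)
Your argument is correct and is exactly the reasoning the paper leaves implicit behind its one-line proof (``straightforward from the construction''): unfolding the definition of quasi-subformula shows $T^\Theta(i) \subseteq \mathrm{Sub}(\chi) \cup \{\neg\theta \mid \theta \in \mathrm{Sub}(\chi)\}$, a finite set depending only on the root formula. Your explicit uniform bound $2\,|\mathrm{Sub}(\chi)|$ is a useful observation for the termination argument, but the approach is the same.
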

\begin{proof}
  Straightforward from the construction of $T^\Theta(i)$.
\end{proof}

Using the set $T^\Theta(i)$ defined above, we can give a definition of twins.

\begin{definition}
    Let $\Theta$ be a branch of a tableau. We call nominals $i, j$ \emph{twins} in $\Theta$ if $T^\Theta(i) = T^\Theta(j)$.
\end{definition}

We need one more step to understand a quasi-urfather to define a generating relation between nominals.

\begin{definition}
  Let $\Theta$ be a tableau branch, and let $i$ and $j$ be nominals occurring in $\Theta$. If $j$ is introduced by applying $[\diamondsuit]$ to a formula $@_i \diamondsuit \varphi$, we say that $j$ is a \emph{generated nominal} from $i$ (denoted as $i \prec_\Theta j$).
  \label{defgen}
\end{definition}

Note that the only rule generating a new nominal is $[\diamondsuit]$. Since all the accessibility formulas emerge in a branch only when $[\diamondsuit]$ is applied to some formula in the branch, $i \prec_\Theta j$ is equivalent to that there is an accessibility formula $@_i \diamondsuit j$ in $\Theta$.

Based on these preparations, we can give a precise definition of quasi-urfather.

\begin{definition}[quasi-urfather]
  We call a nominal $i$ \emph{quasi-urfather} on $\Theta$ if there are no twins $j, k$ such that $j \neq k$ and $j, k \prec_\Theta^* i$, where $\prec_\Theta^*$ denotes a reflexive and transitive closure of $\prec_\Theta$.
  \label{defqur}
\end{definition}

Before proving the termination property, we see an example that $\Drest$ plays an important role in stopping extending a branch.

\begin{example}
    With $\Drest$, the tableau in Figure \ref{tabsym} stops before adding the 16th formula $@_{i_3} \dia i_5$. In this branch, up to the 15th formula $@_{i_3} \dia \dia i$, nominals $i_1$ and $i_3$ are twins, since $T_\Theta(i_1) = T_\Theta(i_3) = \{ \dia i, \dia \dia i \}$. Then $i_3$ is not a quasi-urfather, so we can no longer apply $[\dia]$ to the 15th formula $@_{i_3} \dia \dia i$.
\end{example}

Now we move on to prove the termination property.

\begin{lemma}
  Given a branch $\Theta$ of a tableau, we define a structure $G = (N^\Theta, \prec_\Theta)$ where
  \begin{itemize}
    \item $N^\Theta$ is the set of nominals occurring in $\Theta$.
    \item $\prec_\Theta$ is the binary relation defined in Definition \ref{defgen}.
  \end{itemize}
  Then $G$ is a finite set of well-founded and finitely branching trees.
  \label{lemG}
\end{lemma}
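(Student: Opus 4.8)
The plan is to verify the three features asserted of $G$ in turn: that it is a forest whose components are the claimed trees, that it has only finitely many components, and that each component is well-founded and finitely branching. Throughout I will use three elementary observations about $\tableauIB$: (a) the only rule that introduces a nominal not already occurring in the branch is $[\dia]$, and the nominal it introduces is fresh (restriction $*1$); (b) $[\dia]$ is applied at most once to each formula (restriction $*2$) and never to an accessibility formula (restriction $*3$); and (c) the quasi-subformula property of Lemma \ref{lemquasi}.

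First I would bound the number of trees. An easy induction on the length of the construction of $\Theta$, inspecting each rule, shows that every nominal occurring in $\Theta$ either occurs already in the root formula or is a generated nominal in the sense of Definition \ref{defgen}: indeed $[\neg\neg]$, $[\land]$, $[\neg\land]$, $[\neg\dia]$, $[@]$, $[\neg@]$, $[\Id]$, $[\reflexivity]$ and $[\squaresym]$ operate only on nominals already present, and $[\dia]$ is the single exception, introducing a fresh one. Hence a nominal with no $\prec_\Theta$-predecessor (a root of $G$) must occur in the root formula, and the root formula, being a single formula, contains only finitely many nominals; so $G$ has finitely many roots, and each of its connected components will be a tree.

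Next I would show $G$ is a forest: each nominal has at most one $\prec_\Theta$-predecessor, and $\prec_\Theta$ is acyclic; moreover each component is well-founded. For the first point, recall from the remark following Definition \ref{defgen} that $i \prec_\Theta j$ holds precisely when the accessibility formula $@_i \dia j$ occurs in $\Theta$. A generated nominal $j$ is fresh at the moment it is introduced, so there is exactly one application of $[\dia]$ that introduces $j$, and it produces a single accessibility formula $@_i \dia j$; hence $i$ is the unique $\prec_\Theta$-predecessor of $j$. For acyclicity and well-foundedness, assign to each nominal the position in $\Theta$ at which it first appears, with the nominals of the root formula receiving position $0$. If $i \prec_\Theta j$, then $i$ is already present when $[\dia]$ is applied to introduce $j$, so $i$'s position is strictly smaller than $j$'s. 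Thus positions increase strictly along $\prec_\Theta$, which forbids cycles and forbids infinite sequences $i_0, i_1, i_2, \dots$ with $i_{n+1} \prec_\Theta i_n$; in particular each component of $G$ is a well-founded tree.

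Finally I would check that each tree is finitely branching, which I expect to be the one step requiring real care. The $\prec_\Theta$-successors of a nominal $i$ are the generated nominals $j$ with $@_i \dia j$ an accessibility formula in $\Theta$; each such formula is obtained by applying $[\dia]$ to a formula of the form $@_i \dia \varphi$ in $\Theta$, distinct successors arise from distinct such formulas by $*2$, and by $*3$ the formulas $@_i \dia \varphi$ in question are not themselves accessibility formulas. By Lemma \ref{lemquasi}, any such non-accessibility formula $@_i \dia \varphi$ is a quasi-subformula of the root formula or of an irreflexivity formula $@_k \neg \dia k$; the subformulas of the root formula are finite in number, and being a quasi-subformula of an irreflexivity formula pins $@_i \dia \varphi$ down to the form $@_i \dia k$. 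The delicate point is precisely to argue from here that $i$ cannot accumulate infinitely many distinct premises $@_i \dia \varphi$ for $[\dia]$ even along an infinite branch — this is exactly where the quasi-subformula property does the real work, the rest being bookkeeping about which rule can create a nominal and when. Granting it, $i$ has finitely many $\prec_\Theta$-successors, and together with the previous two paragraphs $G$ is a finite collection of well-founded, finitely branching trees.
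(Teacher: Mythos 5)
Your proposal is correct and follows essentially the same route as the paper: the roots of $G$ are the nominals of the root formula, well-foundedness comes from the order in which nominals first occur in $\Theta$, uniqueness of predecessors comes from the freshness condition on $[\dia]$, and finite branching comes from the quasi-subformula property. The one step you flag and "grant" --- that a nominal $i$ cannot accumulate infinitely many distinct non-accessibility premises $@_i \dia \varphi$ for $[\dia]$ --- is exactly the point the paper disposes of by noting that each such premise contributes an element of the finite set $T^\Theta(i)$; to close your residual case one only has to check that no rule ever manufactures a positive formula $@_i \dia k$ out of an irreflexivity formula $@_k \neg \dia k$, so the third alternative of Lemma~\ref{lemquasi} never actually supplies a $[\dia]$-premise and every such premise is a quasi-subformula of the root formula.
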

\begin{proof}
  First, we show that $G$ is well-founded. To show that, Suppose otherwise, i.e., there exists an infinite path in $G$ such that
  \[
    i_0 \succ_\Theta i_1 \succ_\Theta i_2 \succ_\Theta \cdots .
  \]
  For the restriction of applying $[\diamondsuit]$, each nominal $i_0, i_1, i_2, \dots$ is different from any others. By the definition of $\prec_\Theta$, $i_{n+1}$ occurs before $i_n$. However, since $\Theta$ is well-founded, that is a contradiction.

  The next work is to prove that all nodes in $G$ have only finite branches. However, it is directly proved from two facts: $T^\Theta(i)$ has only finite elements for all $i$, and for each $\varphi \in T^\Theta(i)$, $@_i \varphi$ generates at most one new nominal.

  Finally, we show that $G$ is a finite set of trees. The only nominals which can be a root of $G$ are the nominals in the root formula. Since the root formula has a finite length, nominals in the root formula are also finite. Moreover, it cannot be the case that for some $i, j, k \in G$ we have both $i \prec_\Theta k$ and $j \prec_\Theta k$, since $[\diamondsuit]$ requests not to use a nominal which has already occurred.
\end{proof}

\begin{lemma}
  Let $\Theta$ be a branch of a tableau. Then $\Theta$ is infinite if and only if we have the following infinite sequence:
  \[
    i_0 \prec_\Theta i_1 \prec_\Theta i_2 \prec_\Theta \cdots .
  \]
  \label{leminf}
\end{lemma}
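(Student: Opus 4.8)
The plan is to prove the two implications separately, with essentially all the work in the forward direction.

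For the ``if'' direction, assume an infinite sequence $i_0 \prec_\Theta i_1 \prec_\Theta i_2 \prec_\Theta \cdots$. By the remark following Definition \ref{defgen}, each $i_n \prec_\Theta i_{n+1}$ witnesses that the accessibility formula $@_{i_n} \dia i_{n+1}$ occurs in $\Theta$. Restriction $*1$ on $[\dia]$ forces every nominal introduced by $[\dia]$ to be fresh for the branch at the moment it appears, so $i_0, i_1, i_2, \dots$ are pairwise distinct; hence the formulas $@_{i_n} \dia i_{n+1}$ ($n \geq 0$) are pairwise distinct as well, and $\Theta$ is infinite.

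For the ``only if'' direction, suppose $\Theta$ is infinite. The crucial step is to show that then infinitely many nominals occur in $\Theta$, i.e.\ $N^\Theta$ is infinite, and I would prove the contrapositive. Every formula on a branch has the form $@_i \varphi$ (the root formula does, and every rule produces such a formula), so if $N^\Theta$ is finite it suffices to show that for each fixed $i \in N^\Theta$ the set $\{\, @_i \varphi \mid @_i \varphi \in \Theta \,\}$ is finite, whence $\Theta$ is a finite union of finite sets. Fix $i$ and invoke Lemma \ref{lemquasi}: every $@_i \varphi \in \Theta$ is either (a) a quasi-subformula of the root formula, (b) an accessibility formula, or (c) a quasi-subformula of $@_j \neg \dia j$ for some $j$ occurring in $\Theta$. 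Type (a) contributes finitely many formulas with outer nominal $i$, since the fixed, finite root formula has only finitely many subformulas; type (c) contributes finitely many, since $N^\Theta$ is finite and each $@_j \neg \dia j$ has only finitely many quasi-subformulas. For type (b), an accessibility formula with first nominal $i$ has the shape $@_i \dia k$ and is produced by applying $[\dia]$ to some non-accessibility formula $@_i \dia \chi \in \Theta$ (restriction $*3$), at most once per such formula (restriction $*2$); by Lemma \ref{lemquasi} again, such a $@_i \dia \chi$ is either a quasi-subformula of the root (finitely many with first nominal $i$) or a quasi-subformula of some $@_j \neg \dia j$, which forces $\chi = j$ and hence, as $N^\Theta$ is finite, leaves only finitely many possibilities. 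Therefore $N^\Theta$ must be infinite.

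Finally, I would combine this with Lemma \ref{lemG} and K\"onig's lemma. By Lemma \ref{lemG}, $G = (N^\Theta, \prec_\Theta)$ is a finite family of finitely branching trees; if every one of them were finite, then $N^\Theta$ would be finite, contradicting the previous paragraph. Hence some tree in $G$ has infinitely many nodes, and being finitely branching it contains, by K\"onig's lemma, an infinite path starting at its root --- which is exactly an infinite sequence $i_0 \prec_\Theta i_1 \prec_\Theta i_2 \prec_\Theta \cdots$. I expect the main obstacle to be the bookkeeping inside the contrapositive claim, specifically bounding the number of accessibility formulas that share a fixed first nominal: a priori there is no bound on how many fresh nominals $[\dia]$ can create, so one genuinely needs restrictions $*2$ and $*3$ together with Lemma \ref{lemquasi} to see that only finitely many such formulas can arise from a given $i$. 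The remaining steps --- the ``if'' direction and the K\"onig's lemma argument --- are routine given Lemmas \ref{lemquasi} and \ref{lemG}.
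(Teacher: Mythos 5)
Your proposal is correct and follows essentially the same route as the paper: the only-if direction is reduced to showing that an infinite branch must contain infinitely many nominals (via Lemma \ref{lemquasi}), and then Lemma \ref{lemG} plus K\"onig's lemma yields the infinite $\prec_\Theta$-chain. You simply spell out in more detail the finiteness bookkeeping (including the count of accessibility formulas per nominal) that the paper leaves implicit.
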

\begin{proof}
  Since the if-part is immediate, we only show the only-if-part. Suppose $\Theta$ is infinite. Then $\Theta$ has an infinite number of nominals; if otherwise, the number of formulas occurring in $\Theta$ are finite by Lemma \ref{lemquasi}, but this contradicts that $\Theta$ is infinite. Thus, for $G = (N^\Theta, \prec_\Theta)$ in Lemma \ref{lemG}, $N^\Theta$ is an infinite set. However, since $G$ is a finite set of well-founded and finitely branching trees, there is a tree and its infinite path in $G$ by K\"{o}nig's lemma, as desired.
\end{proof}

\begin{theorem}
  The tableau calculus $\tableauIB$ has the termination property.
\end{theorem}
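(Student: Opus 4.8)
The plan is to argue by contradiction, combining Lemma~\ref{leminf} with the restriction $\Drest$ and a counting argument on the sets $T^\Theta(i)$. Suppose some branch $\Theta$ of a tableau in $\tableauIB$ is infinite. By Lemma~\ref{leminf} there is an infinite generating chain
\[
  i_0 \prec_\Theta i_1 \prec_\Theta i_2 \prec_\Theta \cdots,
\]
and, because $[\dia]$ always introduces a fresh nominal, the $i_0, i_1, i_2, \dots$ are pairwise distinct; moreover each $i_{n+1}$ was obtained by applying $[\dia]$ to some formula $@_{i_n}\dia\varphi_n$ occurring in $\Theta$.

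The key step is to note that $\Drest$ forces every $i_n$ to be a quasi-urfather on $\Theta$: since $[\dia]$ was applied to $@_{i_n}\dia\varphi_n$, the condition $\Drest$ is precisely the assertion that $i_n$ is a quasi-urfather. Now fix $n$. Each of $i_0,\dots,i_n$ satisfies $i_k \prec_\Theta^* i_n$ (following the chain), and they are pairwise distinct; hence if $T^\Theta(i_k)=T^\Theta(i_l)$ held for some $k\neq l$ with $k,l\le n$, then $i_k$ and $i_l$ would be distinct twins with $i_k,i_l \prec_\Theta^* i_n$, contradicting Definition~\ref{defqur} applied to the quasi-urfather $i_n$. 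Therefore $T^\Theta$ is injective on $\{i_0,\dots,i_n\}$, and since $n$ is arbitrary, $T^\Theta$ is injective on the infinite set $\{i_0,i_1,i_2,\dots\}$.

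This is impossible: by Definition~\ref{defT} every $T^\Theta(i)$ consists of quasi-subformulas of the root formula of $\Theta$, and as the root formula has finite length it has only finitely many quasi-subformulas, so $T^\Theta$ takes only finitely many values and cannot be injective on an infinite set. This contradiction shows $\Theta$ is finite, which is the termination property.

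I expect the main obstacle to be the middle step: confirming that \emph{every} nominal along the generating chain is genuinely a quasi-urfather and that the relevant ancestors and twins are visible at the stage where $[\dia]$ is applied. One must be careful whether ``quasi-urfather'' in $\Drest$ is understood relative to the complete branch $\Theta$ or to the branch at the moment of the rule application; with the former (which matches how $\prec_\Theta$, $N^\Theta$, and $T^\Theta$ are used in Lemmas~\ref{leminf} and~\ref{lemG}) the argument above applies directly, while with the latter one instead observes that truncating the chain at stage $n$ already forces $i_0,\dots,i_n$ to have pairwise distinct $T$-values at that stage, again bounding the chain length by the finite number of possible values. Everything else --- distinctness of the chain nominals, finiteness of the quasi-subformula set, and the concluding pigeonhole --- is routine given the earlier lemmas, and no model-theoretic machinery from later sections is needed.
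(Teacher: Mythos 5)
Your proof is correct and follows essentially the same route as the paper: contradiction via Lemma~\ref{leminf}, using $\Drest$ to make the nominals along the generating chain quasi-urfathers, and then a pigeonhole argument on the finitely many possible values of $T^\Theta(i)$. The stage-relativity issue you flag is real (the quasi-urfather check happens at application time, so your ``full branch $\Theta$'' version is not directly justified), but the paper resolves it exactly as your fallback does, by working with a truncated branch $\Theta'$ (up to the first occurrence of $i_{2^n+1}$) and applying the pigeonhole principle to the sets $T^{\Theta'}(i_0),\ldots,T^{\Theta'}(i_{2^n})$.
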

\begin{proof}
  By \textit{reductio ad absurdum}.

  Suppose there is an infinite branch $\Theta$. Then by Lemma \ref{leminf}, there is an infinite sequence of nominals as follows:
  \[
    i_0 \prec_\Theta i_1 \prec_\Theta i_2 \prec_\Theta \cdots .
  \]
  For $\Theta$ and its root formula $@_i \varphi$, we define $Q$ and $n$ as follows:
  \[
    Q = \{ \psi \mid @_i \psi \text{ is a quasi-subformula of } @_i \varphi \},
  \]
  and $n$ is the number of elements in $Q$. Also, let $\Theta'$ be a fragment of $\Theta$ up to, but not including, the first occurrence of $i_{2^n + 1}$. Then $i_{2^n + 1}$ is generated by applying $[\dia]$ to some $@_{i_{2^n}} \dia \varphi$. Taking $\Drest$ into consideration, $i_{2^n}$ is a quasi-urfather.

  However, since all of $T^{\Theta'}(i_0), T^{\Theta'}(i_1), \ldots, T^{\Theta'}(i_{2^n})$ are subsets of $Q$ and $n$ is the cardinality of $Q$, there exists a pair $0 \leq l, m\leq 2^n$ such that $T^{\Theta'}(i_l) = T^{\Theta'}(i_m)$ by the pigeonhole principle. Therefore, we find the twins $i_l$ and $i_m$ such that $i_l, i_m \prec_\Theta^* i_{2^n}$, but that contradicts that $i_{2^n}$ is a quasi-urfather.
\end{proof}

\subsection{Completeness}
\label{subseccomp}

Next, we prove the completeness. The basic strategy is the same as in \cite[Section 5.2]{bolander2007}. That is, define an \emph{identity urfather} and use it to create a model from an open saturated branch of tableau. When constructing the model, make sure that the relation $R^\Theta$ is symmetric.

\begin{definition}
    A branch $\Theta$ of a tableau is \emph{saturated} if every new formula generated by applying some rules already exists in $\Theta$. We call a tableau \emph{saturated} if all branches of it are saturated.
    \label{defsat}
\end{definition}

The following are some properties that a saturated branch $\Theta$ has.
\begin{itemize}
    \item If $@_i (\varphi \land \psi) \in \Theta$, then $@_i \varphi, @_i \psi \in \Theta$.
    \item If $@_i \dia \varphi \in \Theta$ which is not an accessibility formula and $i$ is a quasi-urfather, then there exists some $j \in \Nom$ such that $@_i \dia j, @_j \varphi \in \Theta$.
    \item If $@_i \square \varphi$ and $@_j \diamondsuit i \in \Theta$, then $@_j \varphi \in \Theta$.
    \item For all $i$ occurring in $\Theta$, $@_i \neg \dia i \in \Theta$.
\end{itemize}

\begin{definition}
  Let $\Theta$ be a tableau branch and $i$ a nominal occurring in $\Theta$. The \emph{identity urfather} of $i$ on $\Theta$ (denoted as $v_\Theta(i)$) is the earliest introduced nominal $j$ satisfying the following conditions:
  \begin{enumerate}
    \item $j$ is a twin of $i$.
    \item $j$ is a quasi-urfather.
  \end{enumerate}
  \label{defiur}
\end{definition}

Note that there may be no identity urfather of some $i$. If $v_\Theta(i)$ exists for given $i$, we write $i \in \dom(v_\Theta)$, which is widely used notation in mathematics.

Now we show some properties of $\dom(v_\Theta)$.

\begin{lemma}
    Let $\Theta$ be a branch of a tableau. If $i$ occurs in the root formula of $\Theta$, then $i \in \dom(v_\Theta)$.
    \label{lemrightiur}
\end{lemma}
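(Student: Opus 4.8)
The plan is to show that any nominal $i$ occurring in the root formula is its own identity urfather, i.e.\ $v_\Theta(i) = i$. By Definition \ref{defiur}, this requires verifying that $i$ is a twin of itself (trivial, since $T^\Theta(i) = T^\Theta(i)$) and that $i$ is a quasi-urfather on $\Theta$. The nontrivial content is entirely in the second point.

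To see that $i$ is a quasi-urfather, I would argue by the structure of the generating relation $\prec_\Theta$. By Lemma \ref{lemG}, $G = (N^\Theta, \prec_\Theta)$ is a finite set of well-founded, finitely branching trees, and — as observed in the proof of Lemma \ref{lemG} — the only nominals that can serve as roots of these trees are precisely the nominals occurring in the root formula, because every generated nominal is introduced by $[\diamondsuit]$ applied to some formula $@_k \diamondsuit \varphi$ with $k$ already present. Hence if $i$ occurs in the root formula, there is no nominal $k$ with $k \prec_\Theta i$, so the only nominal $j$ with $j \prec_\Theta^* i$ is $i$ itself. Consequently there cannot exist two distinct nominals $j \neq k$ with $j, k \prec_\Theta^* i$, which is exactly the condition in Definition \ref{defqur} for $i$ to be a quasi-urfather.

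Combining the two observations: $i$ is a twin of itself and a quasi-urfather, so $i$ satisfies both defining conditions for being an identity urfather of $i$. Since $i$ is (vacuously, as a root-formula nominal introduced at the very start) an earliest-introduced such nominal, or at any rate $v_\Theta(i)$ is well defined because the set of witnesses is nonempty, we get $i \in \dom(v_\Theta)$ (in fact $v_\Theta(i) = i$).

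The main obstacle, if any, is a bookkeeping subtlety rather than a conceptual one: I must be careful that the claim ``the only possible roots of $G$ are nominals in the root formula'' is applied correctly — in particular that the root nominal $i$ of the whole tableau and any other nominals literally appearing in $@_i\varphi$ genuinely have no $\prec_\Theta$-predecessor, which follows because $[\diamondsuit]$ always produces a \emph{new} nominal not already on the branch, so no rule can ever make an already-present nominal into a generated one. Once that is pinned down, the rest is immediate from the definitions.
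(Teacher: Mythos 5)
Your proof is correct and follows essentially the same route as the paper: a root-formula nominal has no $\prec_\Theta$-predecessor (since $[\diamondsuit]$ always introduces a fresh nominal), hence is a quasi-urfather, hence the set of candidates for $v_\Theta(i)$ is nonempty. The only quibble is your closing parenthetical ``in fact $v_\Theta(i)=i$'', which need not hold if some earlier-introduced nominal happens to be a twin of $i$ and a quasi-urfather; but this does not affect the conclusion $i \in \dom(v_\Theta)$.
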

\begin{proof}
  Suppose $i$ occurs in the root formula of $\Theta$. Then there exists no nominal $j$ such that $j \prec_\Theta i$, since when we use $[\dia]$, we must introduce a new nominal. Then $i$ is a quasi-urfather. Therefore, $v_\Theta(i)$ is definable.
\end{proof}

\begin{lemma}
    Let $\Theta$ be a saturated branch and $i$ be a quasi-urfather on $\Theta$. If $i \prec_\Theta j$, then $j \in \dom(v_\Theta)$.
    \label{lemiursucc}
\end{lemma}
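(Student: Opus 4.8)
The plan is to produce a nominal that is simultaneously a quasi-urfather and a twin of $j$; once such a nominal is shown to exist, the earliest-introduced one is by Definition~\ref{defiur} exactly $v_\Theta(j)$, and hence $j \in \dom(v_\Theta)$.

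First I would record the shape of the generating structure: by Lemma~\ref{lemG} the graph $G = (N^\Theta, \prec_\Theta)$ is a forest, so every nominal has at most one $\prec_\Theta$-parent, and therefore the set of $\prec_\Theta^*$-predecessors of any fixed nominal is a $\prec_\Theta^*$-chain. Applying this to $j$: since $i \prec_\Theta j$, the nominal $i$ is the unique parent of $j$, so any nominal that lies $\prec_\Theta^*$-strictly below $j$ in fact satisfies $\prec_\Theta^* i$.

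Next I would distinguish two cases. If $j$ is itself a quasi-urfather, we are done (the witness is $j$). Otherwise, by Definition~\ref{defqur} there are twins $l \neq m$ with $l, m \prec_\Theta^* j$. By the chain property these are $\prec_\Theta^*$-comparable; rename them so that $l$ is $\prec_\Theta^*$-below $m$ (and $l \neq m$). If $m \neq j$, then both $l$ and $m$ lie strictly below $j$, hence both satisfy $\prec_\Theta^* i$; being distinct twins, they witness that $i$ is not a quasi-urfather, a contradiction. So $m = j$, and we obtain a twin $l$ of $j$ with $l \neq j$ and $l \prec_\Theta^* i$.

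Finally I would verify that this $l$ is a quasi-urfather: if it were not, there would be distinct twins $\prec_\Theta^*$-below $l$, hence (by transitivity, using $l \prec_\Theta^* i$) distinct twins $\prec_\Theta^*$-below $i$, contradicting that $i$ is a quasi-urfather. Thus $l$ is a quasi-urfather and a twin of $j$, so the set of quasi-urfather twins of $j$ is non-empty and $v_\Theta(j)$ is defined. The main obstacle is the bookkeeping with $\prec_\Theta^*$ and the forest structure --- in particular the step that pushes ``two distinct twins below $j$'' down to ``two distinct twins below $i$'' whenever the larger twin is not $j$ --- while the rest is a direct unfolding of the definitions of twin and quasi-urfather; I do not expect to need the saturation hypothesis on $\Theta$ for this particular lemma.
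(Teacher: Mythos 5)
Your proof is correct and follows essentially the same route as the paper's: case-split on whether $j$ is itself a quasi-urfather, use the forest structure of $(N^\Theta,\prec_\Theta)$ from Lemma~\ref{lemG} to force the larger of the two offending twins to coincide with $j$, and conclude that the smaller one is a quasi-urfather twin of $j$ because all of its $\prec_\Theta^*$-predecessors are also predecessors of $i$. Your observation that saturation is not actually needed also matches the paper, whose argument likewise never invokes it.
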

\begin{proof}
    If $j$ is a quasi-urfather, the lemma immediately holds, so suppose otherwise. Then there exists a pair of twins $k, k'$ such that $k \prec_\Theta^* k' \prec_\Theta^* j$. Since $i$ is a quasi-urfather, either $k \not \prec_\Theta^* i$ or $k' \not \prec_\Theta^* i$ holds. Now assume $k' \not \prec_\Theta^* i$. Then for the facts that $k' \prec_\Theta^* j$, $i \not \prec_\Theta j$, and $G = (N^\Theta, \prec_\Theta)$ in Lemma \ref{lemG} consists of trees, we acquire $k' = j$. Thus, $j$ and $k$ are twins. Also for the facts that $k \prec_\Theta^* k'$, $i \not \prec_\Theta j = k$, and $G$ is a set of trees, we have $k \prec_\Theta^* i$. Thus $k$ is a quasi-urfather. Therefore, $v_\Theta(j)$ is definable, and it can be $k$.
\end{proof}

Next, we show some properties of $v_\Theta$.

\begin{lemma}
    Let $\Theta$ be a branch. Suppose $@_i \varphi \in \Theta$ is a quasi-subformula of the root formula of $\Theta$ and $i \in \dom(v_\Theta)$. Then $@_{v_\Theta(i)} \varphi \in \Theta$.
    \label{lemiurcl}
\end{lemma}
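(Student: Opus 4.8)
The plan is to unwind the three relevant definitions --- $T^\Theta$ (Definition \ref{defT}), twin, and identity urfather (Definition \ref{defiur}) --- and chain the resulting equalities. First I would observe that since $i \in \dom(v_\Theta)$, the nominal $v_\Theta(i)$ exists and, by clause (1) of Definition \ref{defiur}, it is a twin of $i$; hence $T^\Theta(v_\Theta(i)) = T^\Theta(i)$. Second, the hypothesis says that $@_i \varphi \in \Theta$ and that $@_i \varphi$ is a quasi-subformula of the root formula of $\Theta$, which is exactly what is required for $\varphi$ to belong to $T^\Theta(i)$. Combining these two facts gives $\varphi \in T^\Theta(v_\Theta(i))$, and reading off the definition of $T^\Theta$ once more yields $@_{v_\Theta(i)} \varphi \in \Theta$, as desired.

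I do not expect any real obstacle here: the statement is essentially a restatement of the fact that twins, by definition, carry exactly the same satisfaction formulas among those that are quasi-subformulas of the root, together with the hypothesis that $@_i \varphi$ is one of those formulas. The only points that need a word of care are (i) that the quasi-subformula-hood of $@_j \varphi$ depends only on the syntactic shapes of $\varphi$ and of the root formula, not on which nominal $j$ is attached, so that it is legitimate to ``move'' $\varphi$ from $T^\Theta(i)$ into $T^\Theta(v_\Theta(i))$, and (ii) that $v_\Theta(i)$ is guaranteed to exist precisely because of the standing hypothesis $i \in \dom(v_\Theta)$.

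This lemma is the bookkeeping device that will later let us transfer all root-relevant information true at $i$ to its identity urfather, which is what makes the model construction from an open saturated branch sound; in the write-up I would therefore keep the proof to the three-line chain of definition-unwindings above rather than dress it up further.
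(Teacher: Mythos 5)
Your proof is correct and follows the same route as the paper's: the paper likewise observes that $v_\Theta(i)$ is a twin of $i$ and concludes by the definition of twins (i.e.\ $T^\Theta(i) = T^\Theta(v_\Theta(i))$). Your version merely spells out the definition-unwinding that the paper leaves implicit.
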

\begin{proof}
    If $v_\Theta(i) = i$, there is nothing to prove. Even if else, $v_\Theta(i)$ and $i$ are twins. Then the lemma holds by the definition of twins.
\end{proof}

\begin{lemma}
    Let $\Theta$ be a saturated branch. If $@_i j \in \Theta$ and $i, j \in \dom(v_\Theta)$, then $v_\Theta(i) = v_\Theta(j)$.
    \label{lemiureq}
\end{lemma}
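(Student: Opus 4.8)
I will prove $v_\Theta(i) = v_\Theta(j)$ by showing that $i$ and $j$ are twins, i.e., $T^\Theta(i) = T^\Theta(j)$; then since the identity urfather is the earliest-introduced nominal that is both a twin of its argument and a quasi-urfather, $v_\Theta(i)$ and $v_\Theta(j)$ are computed from the same set of candidates and hence coincide. So the crux is: if $@_i j \in \Theta$ (with $\Theta$ saturated), then $T^\Theta(i) = T^\Theta(j)$.

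First I would establish one inclusion, say $T^\Theta(i) \subseteq T^\Theta(j)$. Take any $\varphi \in T^\Theta(i)$, so $@_i \varphi \in \Theta$ and $@_i \varphi$ is a quasi-subformula of the root formula. Since $@_i j \in \Theta$ and $@_i \varphi \in \Theta$, the rule $[\Id]$ applies (note its side condition $*3$: the premise $@_i \varphi$ must not be an accessibility formula — but accessibility formulas have the shape $@_k \dia l$ with $l$ a new nominal, and such a formula is not a quasi-subformula of the root formula, so $@_i\varphi$ being a quasi-subformula of the root rules this out). By saturation, $@_j \varphi \in \Theta$. Moreover $@_j \varphi$ is a quasi-subformula of the root formula because $@_i \varphi$ is (the property of being a quasi-subformula of a fixed formula depends only on the part after the $@$). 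Hence $\varphi \in T^\Theta(j)$.

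For the reverse inclusion I need $@_j i \in \Theta$ as well, so that I can run the symmetric argument with the roles of $i$ and $j$ swapped. This is where I would use the saturation closure under $[\reflexivity]$ and $[\Id]$: from $@_i j \in \Theta$ and $@_i i \in \Theta$ (the latter by $[\reflexivity]$, applicable since $i$ occurs in $\Theta$), the rule $[\Id]$ gives $@_j i \in \Theta$ by saturation. Then the argument of the previous paragraph, with $i$ and $j$ interchanged, yields $T^\Theta(j) \subseteq T^\Theta(i)$. (One should double-check the $*3$ condition again: $@_i i$ is not an accessibility formula, being a quasi-subformula of $@_i \neg \dia i$ rather than of the form $@_k \dia l$ for a fresh $l$.) Combining the two inclusions, $i$ and $j$ are twins.

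**Expected obstacle.** The only delicate point is the bookkeeping around accessibility formulas and the side condition $*3$ on $[\Id]$: I must be careful that every premise to which I apply $[\Id]$ is genuinely not an accessibility formula. The clean way to handle this is to observe, via Lemma~\ref{lemquasi}, that a formula which is a quasi-subformula of the root is never an accessibility formula, so all the instances of $[\Id]$ I invoke are legitimate. Once that is in place, the rest is a routine transfer of the twin property to the identity urfathers: since $T^\Theta(i) = T^\Theta(j)$, a nominal is a twin of $i$ iff it is a twin of $j$, so the set of nominals satisfying conditions (1) and (2) of Definition~\ref{defiur} is the same for $i$ and for $j$, and taking the earliest-introduced one gives $v_\Theta(i) = v_\Theta(j)$.
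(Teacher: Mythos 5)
Your proposal is correct and follows essentially the same route as the paper's own proof: use $[\reflexivity]$ and $[\Id]$ to get $@_j i$ from $@_i j$, transfer formulas in both directions via $[\Id]$ to conclude $T^\Theta(i) = T^\Theta(j)$, and then read off $v_\Theta(i) = v_\Theta(j)$ from the definition of identity urfather. Your explicit checks of the side condition $*3$ (that the premises of $[\Id]$ are not accessibility formulas) and of the preservation of quasi-subformulahood are details the paper leaves implicit, but they do not change the argument.
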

\begin{proof}
    Suppose $@_i j \in \Theta$. Since $\Theta$ is saturated, we obtain $@_i i \in \Theta$ from $[\reflexivity]$. Then by $[\Id]$, we have $@_j i \in \Theta$. They mean that we can add all the formulas of the form $@_j \varphi$ to $\Theta$ where $@_i \varphi \in \Theta$, and vice versa. Thus, we have $ T^\Theta(i) = T^\Theta(j)$. Therefore, by definition, $v_\Theta(i) = v_\Theta(j)$.
\end{proof}

\begin{lemma}
    Let $\Theta$ be a branch. A nominal $i$ is the identity urfather if and only if $v_\Theta(i) = i$.
    \label{lemiurid}
\end{lemma}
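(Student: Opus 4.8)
The plan is to unfold Definition~\ref{defiur} directly and to use the fact that the twin relation is an equivalence relation, since it is defined by equality of the sets $T^\Theta(\cdot)$.

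First I would dispose of the easy direction: if $v_\Theta(i) = i$, then $i$ is a twin of itself and, by the definition of $v_\Theta(i)$, a quasi-urfather, so $i$ is the identity urfather of $i$; in particular $i$ is an identity urfather.

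For the converse, suppose $i$ is the identity urfather of some nominal $j$ occurring in $\Theta$, that is, $i = v_\Theta(j)$. By Definition~\ref{defiur} this means that $i$ is a twin of $j$, that $i$ is a quasi-urfather, and that $i$ is the earliest introduced nominal enjoying both of these properties. To show $v_\Theta(i) = i$, note that $i$ is trivially a twin of itself and is a quasi-urfather, so $i$ is a candidate for $v_\Theta(i)$; it remains only to verify that it is the earliest one. Suppose, for contradiction, that some nominal $k$ introduced strictly before $i$ were both a twin of $i$ and a quasi-urfather. Then $T^\Theta(k) = T^\Theta(i) = T^\Theta(j)$, so $k$ is also a twin of $j$, and $k$ is a quasi-urfather introduced before $i$ --- contradicting the minimality of $i = v_\Theta(j)$. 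Hence $i$ is the earliest nominal that is a twin of $i$ and a quasi-urfather, i.e.\ $v_\Theta(i) = i$.

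I expect no real obstacle in this argument; the single point to isolate is that the twin relation is transitive (in fact an equivalence relation), which is immediate from its definition, and everything else is straightforward bookkeeping with the notion of ``earliest introduced nominal'' in Definition~\ref{defiur}.
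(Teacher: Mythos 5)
Your proof is correct and is exactly the unfolding of Definition~\ref{defiur} that the paper has in mind when it says the claim is ``straightforward from the definition''; the one substantive point you isolate --- that twinhood is an equivalence relation, so an earlier twin-and-quasi-urfather of $i$ would also beat $i$ as a candidate for $v_\Theta(j)$ --- is precisely the detail the paper leaves implicit. Nothing further is needed.
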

\begin{proof}
    Straightforward from the definition of identity urfathers.
\end{proof}

\begin{definition}
    Given an open saturated branch $\Theta$ with a root formula $@_{i_0} \varphi_0$ of a tableau in $\tableau_\modalIB$, a model $\Kmodel^\Theta = (W^\Theta, R^\Theta, V^\Theta)$ is defined as follows:
    \begin{align*}
        W^\Theta &= \{ i \mid i \text{ is an identity urfather on } \Theta \}; \\
        R^\Theta &= \{ (v_\Theta(i), v_\Theta(j)) \mid @_i \dia j \in \Theta \text{ and } i, j \in \dom(v_\Theta) \} \\
        &\cup \{ (v_\Theta(j), v_\Theta(i)) \mid @_i \dia j \in \Theta \text{ and } i, j \in \dom(v_\Theta) \}; \\
        V^\Theta(p) &= \{ v_\Theta(i) \mid @_i p \in \Theta \},\text{ where } p \in \Prop; \\
        V^\Theta(i) &=
        \begin{cases}
            \{ v_\Theta(i) \} & \text{if } i \in \dom(v_\Theta), \\
            \{ i_0 \} & \text{otherwise,}
        \end{cases}
        \text{ where } i \in \Nom.
    \end{align*}
    \label{defIBtableaumodel}
\end{definition}

Then we obtain the model existence lemma, which ensures that if we have a tableau with a root formula $@_{i_0} \varphi_0$ one of whose branches are open and saturated, there exists a model which falsifies $\varphi_0$.

\begin{lemma}[model existence lemma]
    Let $\Theta$ be an open saturated branch of a tableau in $\tableau_\modalIB$ and $@_i \varphi$ be a quasi-subformula of the root formula $@_{i_0} \varphi_0$ of $\Theta$, where $i$ denotes an identity urfather. Then we have the following proposition:
    \[
        \text{if } @_i \varphi \in \Theta, \text{ then } \Kmodel^\Theta, v_\Theta(i) \models \varphi.
    \]
    In particular, $\Kmodel^\Theta, v_\Theta(i_0) \models \varphi_0$.
    \label{lemmodelexistIB}
\end{lemma}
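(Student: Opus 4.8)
The plan is to prove the displayed implication by induction on the structure of $\varphi$, carried out uniformly over all identity urfathers $i$ at once (with $\square$, $\lor$, $\to$ treated as the abbreviations they are). Two pieces of bookkeeping recur at every step. First, whenever $@_i\chi$ is a quasi-subformula of the root, every proper subformula of $\chi$ is a subformula of $\varphi_0$, so all formulas produced along the induction remain quasi-subformulas of the root (cf. Lemma \ref{lemquasi}); this keeps both the induction hypothesis and the $T^\Theta$-based ``twin'' arguments applicable. Second, Lemma \ref{lemiurcl} is used repeatedly to transport a formula $@_j\psi\in\Theta$ (with $\psi$ a subformula of $\varphi_0$ and $j\in\dom(v_\Theta)$) down to its identity urfather, giving $@_{v_\Theta(j)}\psi\in\Theta$, so that the induction hypothesis --- which speaks only of identity urfathers (note $v_\Theta(j)$ is one, by Lemma \ref{lemiurid}) --- can be applied at $v_\Theta(j)$.

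For the atomic and nominal cases I would read off Definition \ref{defIBtableaumodel}. If $@_i p\in\Theta$ then $v_\Theta(i)\in V^\Theta(p)$ by definition. If $@_i j\in\Theta$ then $j$ occurs in $\varphi_0$, so $j\in\dom(v_\Theta)$ by Lemma \ref{lemrightiur}, and Lemma \ref{lemiureq} gives $v_\Theta(j)=v_\Theta(i)$, whence $V^\Theta(j)=\{v_\Theta(i)\}$. The negated cases $\neg p$ and $\neg j$ go by contradiction with openness: if $v_\Theta(i)$ were to satisfy $p$ (resp.\ $j$), tracing back through $V^\Theta$ and the twin relation --- using $[\reflexivity]$ in the nominal case to get $@_j j\in\Theta$ --- yields $@_i p\in\Theta$ (resp.\ $@_i j\in\Theta$), contradicting $@_i\neg p\in\Theta$ (resp.\ $@_i\neg j\in\Theta$). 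The Boolean cases ($\neg\neg$, $\land$, $\neg\land$) and the $@$-cases ($@_j\psi$, $\neg@_j\psi$) are routine: apply the matching saturation clause ($[\neg\neg]$, $[\land]$, $[\neg\land]$, $[@]$, $[\neg@]$) to put the relevant immediate subformula(s) into $\Theta$, check they are quasi-subformulas of the root, and invoke the induction hypothesis --- for the $@$-cases also using Lemma \ref{lemrightiur} and Lemma \ref{lemiurcl} at the nominal $j$, which occurs in $\varphi_0$.

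The modal cases carry the real content. For $\varphi=\dia\psi$ with $@_i\dia\psi\in\Theta$: since $@_i\dia\psi$ is a quasi-subformula of the root it cannot be an accessibility formula, so (as $i$ is a quasi-urfather) the relevant saturation clause supplies a nominal $j$ with $@_i\dia j,@_j\psi\in\Theta$; then $i\prec_\Theta j$, so $j\in\dom(v_\Theta)$ by Lemma \ref{lemiursucc}, hence $(v_\Theta(i),v_\Theta(j))\in R^\Theta$, and Lemma \ref{lemiurcl} with the induction hypothesis on $\psi$ gives $\Kmodel^\Theta,v_\Theta(j)\models\psi$, so $\Kmodel^\Theta,v_\Theta(i)\models\dia\psi$. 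For $\varphi=\neg\dia\psi$ with $@_i\neg\dia\psi\in\Theta$: let $v\in W^\Theta$ with $v_\Theta(i)R^\Theta v$; by the definition of $R^\Theta$ there are $k,l\in\dom(v_\Theta)$ with $@_k\dia l\in\Theta$ and either (a) $v_\Theta(k)=v_\Theta(i)$ and $v_\Theta(l)=v$, or (b) $v_\Theta(l)=v_\Theta(i)$ and $v_\Theta(k)=v$. In case (a), $k$ is a twin of $i$, so $@_k\neg\dia\psi\in\Theta$, and $[\neg\dia]$ applied to $@_k\neg\dia\psi$ and $@_k\dia l$ yields $@_l\neg\psi\in\Theta$; transporting to $v=v_\Theta(l)$ and applying the induction hypothesis gives $\Kmodel^\Theta,v\models\neg\psi$. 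In case (b) --- the ``reverse'' edge --- $l$ is a twin of $i$, so $@_l\neg\dia\psi\in\Theta$, and this is exactly where the symmetry rule is essential: $[\squaresym]$ applied to $@_l\neg\dia\psi$ and $@_k\dia l$ pushes $\neg\psi$ back to $k$, giving $@_k\neg\psi\in\Theta$, whence Lemma \ref{lemiurcl} and the induction hypothesis give $\Kmodel^\Theta,v=v_\Theta(k)\models\neg\psi$. As $v$ was arbitrary, $\Kmodel^\Theta,v_\Theta(i)\models\neg\dia\psi$. Finally, the ``in particular'' clause follows by instantiating the proven statement at $i:=v_\Theta(i_0)$, which is an identity urfather by Lemma \ref{lemiurid}, using that $@_{v_\Theta(i_0)}\varphi_0\in\Theta$ (from the root formula together with Lemma \ref{lemrightiur} and Lemma \ref{lemiurcl}) and that $@_{i_0}\varphi_0$ is trivially a quasi-subformula of itself.

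I expect the main obstacle to be precisely case (b) of the $\neg\dia$-step: one must check that every symmetric edge added to $R^\Theta$ is genuinely witnessed by $[\squaresym]$ (in particular that a $\neg\dia$-formula at the twin $l$ is available and can be matched against the $\square$-shaped premise of $[\squaresym]$), and that this use of $[\squaresym]$ dovetails with the urfather bookkeeping --- the formula $@_l\neg\dia\psi$ is present at $l$ only because $\neg\dia\psi$ is a quasi-subformula of the root, and the resulting $@_k\neg\psi$ must be pulled down to $v_\Theta(k)=v$ before the induction hypothesis applies. A secondary point, flagged already in the introduction, is that every world of $\Kmodel^\Theta$ is named; when $\varphi$ is a nominal this forces one to argue that $V^\Theta$ still assigns the correct singleton, which is handled by the uniform use of $v_\Theta$ and Lemma \ref{lemiureq}, the real payoff of this care only surfacing in the bulldozing step that follows. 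Everything else reduces to the mechanical check that each formula one writes down remains a quasi-subformula of the root formula.
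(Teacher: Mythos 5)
Your proposal is correct and follows essentially the same route as the paper's proof: induction on $\varphi$, with $[\dia]$ plus Lemma \ref{lemiursucc} for the $\dia$-case and the split into forward edges (handled by $[\neg\dia]$) and reverse edges (handled by $[\squaresym]$) for the $\neg\dia$-case. Your treatment is in fact slightly more careful than the paper's at two points --- unwinding the definition of $R^\Theta$ via explicit witnesses $k,l$ and twins rather than assuming the edge comes from $@_i\dia j$ or $@_j\dia i$ directly, and transporting formulas to identity urfathers via Lemma \ref{lemiurcl} before invoking the induction hypothesis --- but these are refinements of the same argument, not a different one.
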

\begin{proof}
    By induction on the complexity of $\varphi$.
    \begin{description}
        \item[{$[\varphi = p, \neg p]$}] Straightforward.
        \item[{$[\varphi = j]$}] Suppose $@_i j \in \Theta$, where $j \in \Nom$. Since $i$ is an identity urfather and $j \in \dom(v_\Theta)$ (Lemma \ref{lemrightiur}), we have $v_\Theta(i) = v_\Theta(j)$ by Lemma \ref{lemiureq}. Therefore, $\Kmodel^\Theta, v_\Theta(i) \models j$.
        \item[{$[\varphi = \neg j]$}] Suppose $@_i \neg j \in \Theta$, where $j \in \Nom$. First, by Lemma \ref{lemiurcl}, we have $@_{v_\Theta(i)} \neg j \in \Theta$. Second, since $\Theta$ is saturated, by $[\reflexivity]$, we have $@_j j \in \Theta$. Since $j$ is also a quasi-subformula of the root formula, by Lemma \ref{lemiurcl}, we have $@_{v_\Theta(j)} j \in \Theta$. Hence, $@_{v_\Theta(i)} \neg j, @_{v_\Theta(j)} j \in \Theta$. However, since $\Theta$ is open, $v_\Theta(i) \neq v_\Theta(j)$. Then by the definition of $V^\Theta$, it follows that $\Kmodel^\Theta, v_\Theta(i) \models \neg j$.
        \item[{$[\varphi = (\psi_1 \land \psi_2), \neg (\psi_1 \land \psi_2)]$}] In these cases, the proofs are simple. Then we leave them to the reader.
        \item[{$[\varphi = \dia \psi]$}] Suppose $@_i \dia \psi \in \Theta$. From the facts that $\Theta$ is saturated, $i$ is an identity urfather, and $@_i \dia \psi$ is not an accessibility formula, by $[\dia]$ there exists a nominal $j \in \Nom$ such that $@_i \dia j, @_j \psi \in \Theta$. By Lemma \ref{lemiursucc}, there exists a $v_\Theta(j)$. Hence, we have $v_\Theta(i) R^\Theta v_\Theta(j)$ and $\Kmodel^\Theta, v_\Theta(j) \models \psi$ from the definition of $R^\Theta$ and the induction hypothesis respectively. Therefore, $\Kmodel^\Theta, v_\Theta(i) \models \dia \psi$.
        \item[{$[\varphi = \neg \dia \psi]$}] Suppose $@_i \neg \dia \psi \in \Theta$, and take a nominal $j \in \Nom$ such that $v_\Theta(i) R^\Theta v_\Theta(j)$. (If nothing, $\Kmodel^\Theta, v_\Theta(i) \models \neg \dia \psi$ is straightforward.) Then by the definition of $R^\Theta$, either $@_i \dia j$ or $@_j \dia i$ is in $\Theta$. If the former exists, by $[\neg \dia]$ and the saturation of $\Theta$ we have $@_j \neg \psi$. If else, we can obtain it using $[\squaresym]$. Then in both cases, $\Kmodel^\Theta, v_\Theta(j) \models \neg \psi$ holds by the induction hypothesis. This holds for all $j$, so we have $\Kmodel^\Theta, v_\Theta(i) \models \neg \dia \psi$.
        \item[{$[\varphi = @_j \psi]$}] Suppose $@_i @_j \psi \in \Theta$. Since $\Theta$ is saturated, by $[@]$ we have $@_j \psi \in \Theta$. By the induction hypothesis, $\Kmodel^\Theta, v_\Theta(j) \models \psi$ holds. Therefore, $\Kmodel^\Theta, v_\Theta(i) \models @_j \psi$.
        \item[{$[\varphi = \neg @_j \psi]$}] We can prove similarly to the case of $\varphi = @_j \psi$.
    \end{description}
\end{proof}

To show the completeness theorem for $\tableauIB$, we have to show the symmetry and irreflexivity of $\Kmodel^\Theta$. However, it is not the case that $\Kmodel^\Theta$ is always irreflexive. The following is an example of $\Kmodel^\Theta$ that is not irreflexive. 


\begin{example}
    Consider the branch $\Theta$ in Figure \ref{tabsym}. Note that in $\tableauIB$ this branch is stopped up to the 15th formula $@_{i_3} \dia \dia i$. From $\Theta$, we create a model as Definition \ref{defIBtableaumodel}. Since $i_1$ and $i_3$ are twins and $i_2, i_4$, and $i$ are all twins, we have $v_\Theta(i_3) = i_1$ and $v_\Theta(i_2) = v_\Theta(i_4) = i$. Then $\Kmodel^\Theta$ must be as follows (Figure \ref{figrefpoint} illustrates this model):
    \begin{align*}
        W^\Theta &= \{ i_0, i_1, i \}, \\
        R^\Theta &= \{ (i_0, i_1), (i_1, i_0), (i_1, i_1), (i_1, i), (i, i_1) \}, \\
        V^\Theta(i) &= \{ i \}.
    \end{align*}
    This model has the reflexive world $i_1$.
    \begin{figure}[t]
        \centering
        \begin{tikzpicture}
            \node (w0) [state] {$i_0$};
            \node (w1) [state, right = of w0] {$i_1$};
            \node (w2) [state, right = of w1] {$i$};
            \path [->, >=stealth]
                (w0) edge (w1)
                (w1) edge (w0)
                (w1) edge [loop above] (w1)
                (w1) edge (w2)
                (w2) edge (w1);
        \end{tikzpicture}
        \caption{The model $\Kmodel_\Theta$ constructed from a branch $\Theta$ in Figure \ref{tabsym}}
        \label{figrefpoint}
    \end{figure}
\end{example}

To solve this problem, we \emph{bulldoze} all the reflexive points and turn a frame into an irreflexive one. For modal logic, the bulldozing method was originally proposed in \cite{segerberg1971}. Before giving a formal definition, we cite the explanation of bulldozing by Hughes and Cresswell \cite{hughes1996}.

\begin{quotation}
    $\ldots$ For if we take any reflexive world in any model, i.e., any world which can see itself, and replace it by a pair of worlds each able to see the other but neither able to see itself, \emph{and} we give each variable the same value in each world in the new pair as it had in the original world, $\ldots$ \cite[p. 176]{hughes1996}
\end{quotation}

For hybrid logic, Blackburn \cite{blackburn1990} showed that bulldozing works well in proving the completeness of Hilbert-style axiomatization of irreflexive hybrid logic. Our following lemmas claim that the method also works in the tableau calculus.

\begin{definition}
    Given a model $\Kmodel = (W, R, V)$, the \emph{bulldozed model} $\Kmodel_B = (W_B, R_B, V_B)$ is defined as follows:
    \begin{enumerate}[1. ]
      \item Put $W_r = \{ w \in W \mid w R w \}$.
      \item Define $W_B = W^- \cup \{ (w, n) \mid w \in W_r, n = 0, 1\}$, where $W^- = W \setminus W_r$.
      \item Define $\alpha: W_B \rightarrow W$ by $\alpha(w) = w$ if $w \in W^-$, and $\alpha((w, n)) = w$ otherwise.
      \item Define $R_B$ by $w R_B v$ if one of the following conditions holds:
      \begin{itemize}
        \item $w \in W^-$ or $v \in W^-$, and $\alpha(w) R \alpha(v)$,
        \item $w = (w', m), v = (v', n)$, $w' \neq v'$, and $w' R v'$,
        \item $w \neq v$ and $\alpha(w) = \alpha(v)$.
      \end{itemize}
      \item Define $V_B: \Prop \cup \Nom \rightarrow \mathcal{P}(W_B)$ as follows:
      \begin{itemize}
          \item If $p \in \Prop$, then $w \in V_B(p)$ if and only if $\alpha(w) \in V(p)$.
          \item If $i \in \Nom$, then $V_B(i) = \{ (w, 0) \}$, where $w = i^V$ for $w \in W_r$, and $V_B(i) = V(i)$ otherwise.
      \end{itemize}
    \end{enumerate}
  \label{defbulldoze}
\end{definition}

It can be checked that for any model, a bulldozed one is irreflexive and symmetric. The following is an example.

\begin{example}
    Let $\Kmodel$ be the model in Figure \ref{figrefpoint}. The bulldozed model $\Kmodel_B$ is illustrated in Figure \ref{figbulldoze}. Note that the proposition $i_1$ (and also $i_3$) only holds at $(i_1, 0)$ and fails in $(i_1, 1)$.
    \begin{figure}[t]
        \centering
        \begin{tikzpicture}
            \node (w0) [state] at (-2, 0) {$i_0$};
            \node (w10) [state] at (0, 1) {$(i_1, 0)$};
            \node (w11) [state] at (0, -1) {$(i_1, 1)$};
            \node (w2) [state] at (2, 0) {$i$};
            \path [->, >=stealth]
                (w0) edge (w10)
                (w10) edge (w0)
                (w0) edge (w11)
                (w11) edge (w0)
                (w10) edge (w11)
                (w11) edge (w10)
                (w10) edge (w2)
                (w2) edge (w10)
                (w11) edge (w2)
                (w2) edge (w11);
        \end{tikzpicture}
        \caption{The bulldozed model $\Kmodel_\Theta^B$ constructed from a branch $\Theta$ in Figure \ref{tabsym}}
        \label{figbulldoze}
    \end{figure}
\end{example}

Unlike in the case of modal logic, this construction cannot completely preserve satisfaction. In the model in Figure \ref{figrefpoint}, we have $\Kmodel, i_1 \models \dia i_1$, but the bulldozed model in Figure \ref{figbulldoze}, $\Kmodel_B, (i_1, 0) \models \dia i_1$ does not hold.

However, this method works well if we consider only the quasi-subformulas of the root formula. What is important is that the reflexive worlds in a model from a tableau branch have no information about nominals in the root formula.

\begin{definition}
    Let $\Theta$ be an open saturated branch and $\Kmodel^\Theta = (W^\Theta, R^\Theta, V^\Theta)$ be a model constructed in the way of Definition \ref{defIBtableaumodel}. A world $i \in W^\Theta$ is \emph{named by the root formula of $\Theta$} if there exists a nominal $j$ such that $j \in T^\Theta(i)$.
\end{definition}

\begin{lemma}
    Let $W^\Theta_r = \{ w \in W^\Theta \mid w R^\Theta w \}$. If $i \in W^\Theta_r$, then $i$ is not named by the root formula of $\Theta$.
    \label{lemnotnamed}
\end{lemma}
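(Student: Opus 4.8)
The plan is to argue by \emph{reductio ad absurdum}: assume $i \in W^\Theta_r$ but that $i$ \emph{is} named by the root formula, i.e.\ there is a nominal $j$ with $j \in T^\Theta(i)$, and derive that $\Theta$ is closed, contradicting the standing hypothesis that $\Theta$ is open and saturated.

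First I would unwind the hypothesis $i\, R^\Theta\, i$. By the definition of $R^\Theta$ (whichever half of the union produces the pair), there are nominals $k,l\in\dom(v_\Theta)$ with $@_k\dia l\in\Theta$ and $v_\Theta(k)=v_\Theta(l)=i$. Since $v_\Theta(k)$ is a twin of $k$ and $v_\Theta(l)$ a twin of $l$ (and, by Lemma \ref{lemiurid}, $v_\Theta(i)=i$), the nominals $i,k,l$ are pairwise twins, so $T^\Theta(k)=T^\Theta(l)=T^\Theta(i)$; in particular $j\in T^\Theta(k)$ and $j\in T^\Theta(l)$, hence $@_k j,@_l j\in\Theta$. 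Using $[\reflexivity]$ (so $@_k k,@_l l\in\Theta$) together with $[\Id]$ and saturation, I can then propagate these identities: from $@_k k$ and $@_k j$ get $@_j k$; from $@_l l$ and $@_l j$ get $@_j l$; and from $@_j l$ and $@_j k$ get $@_l k$. Informally, $j$, $k$ and $l$ all point to one and the same world, so the loop $@_k\dia l$ ought to collapse to a ``$@_k\dia k$''.

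Making that collapse precise is the step I expect to be the main obstacle: one cannot simply copy $@_k\dia l$ along the identity $@_k j$ with $[\Id]$, because $@_k\dia l$ may be an accessibility formula and $[\Id]$ is barred from accessibility formulas (side condition $*3$). The way around this is to exploit the irreflexivity rule $\Irest$, which puts $@_k\neg\dia k$ into $\Theta$, and then feed $@_k\neg\dia k$ and $@_k\dia l$ to $[\neg\dia]$ (which carries no such restriction): this yields $@_l\neg k\in\Theta$. But $@_l k\in\Theta$ was already obtained above, so $\Theta$ contains both $@_l k$ and $@_l\neg k$ and is therefore closed --- the contradiction we want. (If $k=l$ the same argument degenerates to having both $@_k\dia k$ and $@_k\neg\dia k$ in $\Theta$, which again closes the branch.) The remaining work is purely a matter of checking side conditions: that $k,l,j$ all occur in $\Theta$ so that $[\reflexivity]$ and $\Irest$ apply to them, and that every formula handed to $[\Id]$ is of the form $@_a b$ with $b$ a nominal and hence not an accessibility formula.
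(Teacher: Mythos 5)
Your proposal is correct and follows essentially the same route as the paper's proof: both extract nominals $k,l$ with $@_k\dia l\in\Theta$ and $v_\Theta(k)=v_\Theta(l)=i$, use the twin relation to transfer the naming nominal to $k$ and $l$, and then close the branch by combining $\Irest$ and $[\neg\dia]$ with $[\reflexivity]$ and $[\Id]$. The only difference is cosmetic routing: the paper derives the clash $@_l j,\ @_l\neg j$ at the naming nominal, while you derive $@_l k,\ @_l\neg k$ at the successor nominal.
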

\begin{proof}
    By \textit{reductio ad absurdum}.

    Suppose $i$ is named by the root formula of $\Theta$. From $i \in W^\Theta_r$, we have $i R^\Theta i$, which means that there are nominals $j$ and $k$ such that $v_\Theta(j) = v_\Theta(k) = i$ and $@_j \dia k \in \Theta$. (They do not have to be different; for example, it is possible that $j = k$.) By the assumption, there exists another nominal $l$ such that $l \in T^\Theta(i)$. By the definition of identity urfather, all of $i, j, k$ are twins. Thus, we have another nominal $l$ such that $l \in T^\Theta(j)$ and $l \in T^\Theta(k)$, which means that both $@_j l$ and $@_k l$ are in $\Theta$. Therefore, we can show that $\Theta$ is closed as Figure \ref{tabnamedclose}, which is a contradiction.
    \begin{figure}[t]
        \begin{align*}
            &1.\,@_j \dia k & \\
            &2.\,@_j l & \\
            &3.\,@_k l & \\
            &4.\,@_j \neg \dia j &\Irest \\
            &5.\,@_j j &[\reflexivity] \\
            &6.\,@_k \neg j &(1, 4, [\neg \dia]) \\
            &7.\,@_l j &(2, 5, [\Id]) \\
            &8.\,@_l \neg j &(3, 6, [\Id]) \\
            &\quad \text{\ding{55}} &
        \end{align*}
        \caption{A closed branch $\Theta$ if $i \in W^\Theta_r$ is named in $\Theta$}
        \label{tabnamedclose}
    \end{figure}
\end{proof}

\begin{lemma}
    Let $\Theta$ be an open saturated branch, and $@_{i_0} \varphi_0$ be a root formula. For any nominal $i \in W_r^\Theta$ and formula $\varphi$ such that $@_i \varphi$ is a quasi-subformula of $@_{i_0} \varphi_0$, we have
    \[
        \Kmodel^\Theta_B, (i, 0) \models \varphi \iff \Kmodel^\Theta_B, (i, 1) \models \varphi .
    \]
    \label{lemwrequiv}
\end{lemma}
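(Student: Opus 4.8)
The plan is to prove the equivalence by induction on the complexity of $\varphi$, uniformly in the reflexive world $i \in W^\Theta_r$. The crucial preliminary observation concerns the shape of $R^\Theta_B$ around the split pair $(i,0),(i,1)$: by the third clause of the definition of $R_B$ we have $(i,0) R^\Theta_B (i,1)$ and $(i,1) R^\Theta_B (i,0)$ (since $(i,0)\neq(i,1)$ and $\alpha((i,0))=\alpha((i,1))=i$); neither $(i,0)$ nor $(i,1)$ is $R^\Theta_B$-related to itself (the bulldozed frame is irreflexive); and for every world $u$ of $\Kmodel^\Theta_B$ with $u\notin\{(i,0),(i,1)\}$ we have $(i,0) R^\Theta_B u \iff (i,1) R^\Theta_B u$. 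The last point is seen by inspecting which clause of the definition of $R_B$ witnesses $(i,0) R^\Theta_B u$: the first two clauses refer to $(i,0)$ only through its first coordinate $\alpha((i,0))=i=\alpha((i,1))$ and hence transfer verbatim to $(i,1)$, whereas the third clause would force $u=(i,1)$. In short, $(i,0)$ and $(i,1)$ see exactly the same worlds apart from the fact that they see each other.

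The atomic case $\varphi = p \in \Prop$ is immediate from $\alpha((i,0))=\alpha((i,1))=i$ and the clause for propositional variables in the definition of $V^\Theta_B$. The case $\varphi = j \in \Nom$ is where Lemma \ref{lemnotnamed} is used: since $@_i j$ is a quasi-subformula of the root formula, $j$ is a subformula of $\varphi_0$, so $j$ occurs in the root formula and $j\in\dom(v_\Theta)$ by Lemma \ref{lemrightiur}; moreover $v_\Theta(j)\neq i$, because otherwise $i$ would be a twin of $j$ and, using $@_j j \in \Theta$ (from $[\reflexivity]$ and saturation, with $j$ a subformula of $\varphi_0$), we would get $j \in T^\Theta(j)=T^\Theta(i)$, i.e. $i$ named by the root formula, contradicting Lemma \ref{lemnotnamed}. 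Hence $V^\Theta_B(j)$ — which equals $\{v_\Theta(j)\}$ or $\{(v_\Theta(j),0)\}$ — contains neither $(i,0)$ nor $(i,1)$, so both sides of the claimed equivalence are false. The Boolean cases $\varphi = \neg\psi$ and $\varphi = \psi_1 \land \psi_2$ reduce at once to the induction hypothesis for the immediate subformulas (which are again quasi-subformulas of the root attached to the same $i$), and $\varphi = @_j\psi$ is trivial since the truth value of $@_j\psi$ does not depend on the point of evaluation.

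The only substantial case is $\varphi = \dia\psi$, and it suffices to show $\Kmodel^\Theta_B,(i,0)\models\dia\psi$ implies $\Kmodel^\Theta_B,(i,1)\models\dia\psi$, the converse being symmetric. Pick $u$ with $(i,0) R^\Theta_B u$ and $\Kmodel^\Theta_B, u \models \psi$; by irreflexivity $u\neq(i,0)$. If $u\neq(i,1)$, the preliminary observation gives $(i,1) R^\Theta_B u$, hence $\Kmodel^\Theta_B,(i,1)\models\dia\psi$. If $u=(i,1)$, then $\Kmodel^\Theta_B,(i,1)\models\psi$; since $\psi$ is a subformula of $\varphi_0$, $@_i\psi$ is a quasi-subformula of the root and the induction hypothesis applied to $\psi$ yields $\Kmodel^\Theta_B,(i,0)\models\psi$, so using $(i,1) R^\Theta_B (i,0)$ we again get $\Kmodel^\Theta_B,(i,1)\models\dia\psi$.

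The main obstacle is exactly this $\dia$-step: bulldozing introduces the ``diagonal'' edge between $(i,0)$ and $(i,1)$, which a purely ``successor-matching'' argument would not survive, and it is only because $\psi$ is a strictly smaller formula that the induction hypothesis can be invoked to carry the witness back across that diagonal. The other point needing care is the nominal case, where it is essential that a reflexive world of $\Kmodel^\Theta$ makes no nominal of the root formula true — precisely Lemma \ref{lemnotnamed}, together with Lemma \ref{lemrightiur} to place $j$ in $\dom(v_\Theta)$.
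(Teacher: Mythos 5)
Your proof is correct and follows essentially the same route as the paper: induction on $\varphi$, with the key $\dia$-case handled by splitting on whether the witness is $(i,1)$ and using the induction hypothesis to carry it across the diagonal edge. You are in fact slightly more careful than the paper on the nominal case, which the paper dismisses in one line via Lemma \ref{lemnotnamed} while you spell out why $v_\Theta(j)\neq i$ forces both sides to be false.
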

\begin{proof}
    By induction on the complexity of $\varphi$. Note that we do not have to consider the case $\varphi = j$ because, by Lemma \ref{lemnotnamed}, $\varphi$ cannot be any nominal $i$ occurring in $\varphi_0$.

    We give the proof only for the cases $\varphi = p, \dia \psi$, and $@_j \psi$ and only for left to right. The other cases are left to the reader.
    \begin{description}
        \item[{$[\varphi = p]$}] If $\Kmodel^\Theta_B, (i, 0) \models p$, we have $(i, 0) \in V^\Theta_B(p)$. By the construction of $V^\Theta_B$, it follows that $(i, 1) \in V^\Theta_B(p)$. Therefore, $\Kmodel^\Theta_B, (i, 1) \models p$.
        \item[{$[\varphi = \dia \psi]$}] Suppose $\Kmodel^\Theta_B, (i, 0) \models \dia \psi$. Then there exists a world $w \in W^\Theta_B$ such that $(i, 0) R^\Theta_B w$ and $\Kmodel^\Theta_B, w \models \psi$. If $w = (i, 1)$, by the induction hypothesis, we have $\Kmodel^\Theta_B, (i, 0) \models \psi$. Thus, we acquire that $\Kmodel^\Theta_B, (i, 1) \models \dia \psi$. If $w \neq (i, 1)$, we have $(i, 1) R^\Theta_B w$ by the construction of $R^\Theta_B$. Therefore, we have $\Kmodel^\Theta_B, (i, 1) \models \dia \psi$.
        \item[{$[\varphi = @_j \psi]$}] Suppose $\Kmodel^\Theta_B, (i, 0) \models @_j \psi$. Then we have $\Kmodel^\Theta_B, j^{V^\Theta_B} \models \psi$. Therefore, we acquire that $\Kmodel^\Theta_B, (i, 1) \models @_j \psi$.
    \end{description}
\end{proof}

\begin{lemma}[bulldozing lemma]
  Let $\Theta$ be an open saturated branch. Also, let $i$ be an identity urfather of $\Theta$ and $\varphi$ be a formula such that $@_i \varphi$ be a quasi-subformula of the root formula. Then we have
  \begin{center}
    $\Kmodel^\Theta, i \models \varphi \iff \Kmodel^\Theta_B, i_B \models \varphi$,
  \end{center}
  where
  \[
    i_B =
    \begin{cases}
      \{ (i, 0) \} &\text{if } i \in W_r, \\
      i & \text{otherwise.}
    \end{cases}
  \]
  \label{lembulldozepreserve}
\end{lemma}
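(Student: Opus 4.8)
The plan is to establish the biconditional by induction on the complexity of $\varphi$, proving both directions at once. The structural fact underpinning the whole argument is that the quasi-subformula condition descends to immediate subformulas: if $@_i \varphi$ is a quasi-subformula of the root formula $@_{i_0} \varphi_0$ and $\psi$ is an immediate subformula of $\varphi$, then $\psi$ is a subformula of $\varphi_0$, so $@_j \psi$ is a quasi-subformula of $@_{i_0} \varphi_0$ for every nominal $j$ occurring in $\Theta$. Since every world occurring in $\Kmodel^\Theta$ is by construction an identity urfather, the induction hypothesis is always available for the relevant subformula at the relevant world. I will also use freely the two identities $\alpha(i_B) = i$ and $j^{V^\Theta_B} = (j^{V^\Theta})_B$ (where $(\cdot)_B$ denotes the bulldozed copy in the sense of the statement), both read off directly from Definition~\ref{defbulldoze}.

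For the base and Boolean cases the work is light. When $\varphi = p \in \Prop$ the equivalence is immediate from $w \in V^\Theta_B(p) \iff \alpha(w) \in V^\Theta(p)$ and $\alpha(i_B) = i$. When $\varphi = j \in \Nom$, the hypothesis forces $j$ to occur in $\varphi_0$; Lemma~\ref{lemnotnamed} then rules out any reflexive world being named by the root formula, so neither $i$ (when $i \in W_r$) nor $j^{V^\Theta}$ can be a reflexive world witnessing $\Kmodel^\Theta, i \models j$, and correspondingly neither $(i,0)$ nor $(i,1)$ lies in $V^\Theta_B(j)$; on the non-reflexive part of $W^\Theta$ the two valuations of nominals coincide, which settles this case. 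The cases $\varphi = \neg \psi$ and $\varphi = \psi_1 \land \psi_2$ are immediate from the induction hypothesis applied to the components. For $\varphi = @_j \psi$, both $\Kmodel^\Theta, i \models @_j \psi$ and $\Kmodel^\Theta_B, i_B \models @_j \psi$ are statements about the world where $j$ is true ($j^{V^\Theta}$ and $j^{V^\Theta_B}$ respectively); since $j^{V^\Theta}$ is an identity urfather and $@_{j^{V^\Theta}} \psi$ is a quasi-subformula of the root, the induction hypothesis links them via $j^{V^\Theta_B} = (j^{V^\Theta})_B$, and $\varphi = \neg @_j \psi$ is identical.

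The substance of the proof, and the step I expect to be the main obstacle, is the modal case $\varphi = \dia \psi$, where one must match $R^\Theta$-edges with $R_B$-edges while accounting for the duplicated reflexive worlds. For the left-to-right direction, from $i R^\Theta u$ with $\Kmodel^\Theta, u \models \psi$ I take the bulldozed copy $u_B$ as a witness whenever $u \neq i$: a short case split on which of $i, u$ lie in $W_r$ verifies $i_B R_B u_B$ by one of the three clauses of Definition~\ref{defbulldoze}, and the induction hypothesis gives $\Kmodel^\Theta_B, u_B \models \psi$. The delicate point is the reflexive loop $i R^\Theta i$ (which forces $i \in W_r$): since $R_B$ is irreflexive one cannot use $u_B = i_B$, so I route the witness through the second copy, using $(i,0) R_B (i,1)$ together with the induction hypothesis and Lemma~\ref{lemwrequiv} to carry $\Kmodel^\Theta, i \models \psi$ over to $\Kmodel^\Theta_B, (i,1) \models \psi$. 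For the right-to-left direction, given $i_B R_B w$ with $\Kmodel^\Theta_B, w \models \psi$, I put $u = \alpha(w)$; inspecting the three clauses of $R_B$ shows $i R^\Theta u$ in every case (the third clause is exactly the edge between the two copies of a reflexive world and forces $i = u \in W_r$), and Lemma~\ref{lemwrequiv} lets me replace $w$ by $u_B$ when $u$ is reflexive, after which the induction hypothesis yields $\Kmodel^\Theta, u \models \psi$ and hence $\Kmodel^\Theta, i \models \dia \psi$. The reason all this bookkeeping closes is precisely Lemma~\ref{lemwrequiv}: a reflexive world carries no information about root nominals, so its two bulldozed copies are indistinguishable on quasi-subformulas of the root formula, which is what makes splitting it harmless for the fragment under consideration.
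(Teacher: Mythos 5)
Your proof is correct and follows essentially the same route as the paper's: induction on the complexity of $\varphi$, with Lemma \ref{lemnotnamed} disposing of the nominal case and Lemma \ref{lemwrequiv} bridging the two copies of a reflexive world in the $\dia$ case. If anything, you are more explicit than the paper in the right-to-left direction of the $\dia$ case, where Lemma \ref{lemwrequiv} is indeed needed again to move the witness from $(u,1)$ to $u_B=(u,0)$ before the induction hypothesis can be applied.
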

\begin{proof}
    By induction on the complexity of $\varphi$.
    \begin{description}
        \item[{$[\varphi = p, \neg p]$}] Straightforward from the definition of $V^\Theta_B$.
        \item[{$[\varphi = j]$}] Note that $i$ is not in $W^\Theta_r$ owing to Lemma \ref{lemnotnamed}. Then we can show the equivalence in the following way.
        \begin{align*}
            \Kmodel^\Theta, i \models j &\iff V^\Theta(j) = \{ i \} \\
            &\iff V^\Theta_B(j) = \{ i_B \} \\
            &\iff \Kmodel^\Theta_B, i_B \models j .
        \end{align*}
        \item[{$[\varphi = \neg j]$}] Similar to the case $\varphi = \neg p$.
        \item[{$[\varphi = \psi_1 \land \psi_2, \neg (\psi_1 \land \psi_2)]$}] Straightforward.
        \item[{$[\varphi = \dia \psi]$}] First, we show the left-to-right direction.

        Suppose $\Kmodel^\Theta, i \models \dia \psi$. Then there exists a world $j \in W^\Theta$ such that $i R^\Theta j$ and $\Kmodel^\Theta, j \models \psi$. If $j \neq i$, we have $i_B R^\Theta j_B$ by the definition of $R^\Theta_B$, and $\Kmodel^\Theta_B, j_B \models \psi$ by the induction hypothesis. Therefore, $\Kmodel^\Theta_B, v_\Theta(i)_B \models \dia \psi$.

        The case to watch out for is $j = i$. In this case, we have $(i, 0) R^\Theta (i, 1)$, and $\Kmodel^\Theta_B, (i, 0) \models \psi$ by the induction hypothesis. However, by Lemma \ref{lemwrequiv}, we have $\Kmodel^\Theta_B, (i, 1) \models \psi$. Therefore, $\Kmodel^\Theta_B, (i, 0) \models \dia \psi$.

        The proof of the other direction is straightforward since for every $w, v \in \Theta$, $w R^\Theta_B v$ implies $\alpha(w) R^\Theta \alpha(v)$.
        \item[{$[\varphi = \neg \dia \psi]$}] Similar to the case $\varphi = \dia \psi$.
        \item[{$[\varphi = @_j \psi]$}] We only show from right to left, and the other is left to the reader.

        Suppose $\Kmodel^\Theta_B, i_B \models @_j \psi$. Take a world $w \in W^\Theta_B$ such that $V^\Theta_B(j) = \{ w \}$, and we have $\Kmodel^\Theta_B, w \models \psi$. Considering that $V^\Theta(j) = \{ v_\Theta(j) \}$, it follows that $w = (v_\Theta(j))_B$. Thus, by induction hypothesis, we have $\Kmodel^\Theta, v_\Theta(j) \models \psi$. Therefore, $\Kmodel^\Theta, i \models @_j \psi$ holds.
        \item[{$[\varphi = \neg @_j \psi]$}] Similar to the case $\varphi = @_j \psi$.
    \end{description}
\end{proof}

Finally, we reach our goal.

\begin{theorem}[completeness]
    The tableau calculus $\tableauIB$ is complete for the class of all orthoframes.
    \label{thmcompleteIB}
\end{theorem}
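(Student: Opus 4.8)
The plan is to prove completeness contrapositively: I will show that whenever $\varphi$ is not provable in $\tableauIB$, there is an orthomodel on which $\varphi$ fails, so $\varphi$ is not valid over the class of all orthoframes. Suppose then that $\varphi$ is not provable, and consider the tableau with root formula $@_{i_0}\neg\varphi$ for some $i_0 \in \Nom$ not occurring in $\varphi$. This tableau is not closed, and by the termination theorem for $\tableauIB$ established above every branch of it is finite; hence some branch $\Theta$ is open, and being a maximal path it is saturated in the sense of Definition \ref{defsat}.

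From $\Theta$ I form the model $\Kmodel^\Theta = (W^\Theta, R^\Theta, V^\Theta)$ of Definition \ref{defIBtableaumodel}. Since $i_0$ occurs in the root formula, Lemma \ref{lemrightiur} gives $i_0 \in \dom(v_\Theta)$; moreover $v_\Theta(i_0)$ is a twin of $i_0$, hence $v_\Theta(v_\Theta(i_0)) = v_\Theta(i_0)$, so $v_\Theta(i_0)$ is an identity urfather by Lemma \ref{lemiurid} and thus a world of $\Kmodel^\Theta$. The model existence lemma (Lemma \ref{lemmodelexistIB}) then yields $\Kmodel^\Theta, v_\Theta(i_0) \models \neg\varphi$.

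Next I bulldoze: let $\Kmodel^\Theta_B$ be the bulldozed model of $\Kmodel^\Theta$ from Definition \ref{defbulldoze}. I must check that $\Kmodel^\Theta_B$ is a bona fide orthomodel. Its carrier is non-empty because $W^\Theta$ is, and each $V^\Theta_B(i)$ is a singleton directly from the construction. Irreflexivity is immediate from the three clauses defining $R_B$: the first needs a reflexive point lying in $W^-$ (which by definition contains none), the second needs $w' \neq v'$, and the third needs $w \neq v$, so no clause can produce a loop. Symmetry holds because $R^\Theta$ is symmetric by construction — Definition \ref{defIBtableaumodel} always adds the pairs $(v_\Theta(i),v_\Theta(j))$ and $(v_\Theta(j),v_\Theta(i))$ together — and each of the three clauses defining $R_B$ is symmetric in $w$ and $v$. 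Hence $\Kmodel^\Theta_B$ is an orthomodel.

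Finally, since $@_{i_0}\neg\varphi$ is trivially a quasi-subformula of itself, the bulldozing lemma (Lemma \ref{lembulldozepreserve}), applied to the identity urfather $v_\Theta(i_0)$ and the formula $\neg\varphi$, transfers the refutation: $\Kmodel^\Theta_B, (v_\Theta(i_0))_B \models \neg\varphi$. Thus $\Kmodel^\Theta_B$ is an orthomodel with a world at which $\varphi$ is false, which establishes that $\varphi$ is not valid on the class of all orthoframes, exactly as the contrapositive requires. The only genuine work in the argument is checking that the bulldozed model really is irreflexive and symmetric; all the delicate bookkeeping — the quasi-subformula property, the behaviour of identity urfathers, and above all Lemma \ref{lemnotnamed} and Lemma \ref{lemwrequiv}, which guarantee that the reflexive worlds carry no information about nominals of the root formula and so can be split harmlessly — has already been discharged in the preceding lemmas, so I expect the main obstacle to be merely lining up their hypotheses correctly rather than any new mathematical difficulty.
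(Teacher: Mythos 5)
Your proposal is correct and follows exactly the route the paper takes: contraposition, extraction of an open saturated branch via termination, the model existence lemma (Lemma \ref{lemmodelexistIB}), and the bulldozing lemma (Lemma \ref{lembulldozepreserve}). The only difference is that you spell out the verification that $\Kmodel^\Theta_B$ is irreflexive and symmetric and that $v_\Theta(i_0)$ is a world of $\Kmodel^\Theta$, details the paper's proof leaves implicit with ``by the construction''; these checks are sound.
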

\begin{proof}
    We show the contraposition. Suppose $\varphi$ is not provable in $\tableauIB$. Then we can find a tableau in $\tableauIB$, whose root formula is $@_i \neg \varphi$ where $i$ does not occur in $\varphi$, and which has an open and saturated branch $\Theta$. Then by Lemma \ref{lemmodelexistIB}, we have $\Kmodel^\Theta, v_\Theta(i) \models \neg \varphi$. By Lemma \ref{lembulldozepreserve}, we have $\Kmodel^\Theta_B, v_\Theta(i)_B \models \neg \varphi$. Moreover, by the construction, $\Kmodel^\Theta_B$ is an orthomodel. Therefore, we can find an orthoframe which falsifies $\varphi$.
\end{proof}

\begin{corollary}
  $\hybridIB$ is decidable.
  \label{cordecidable}
\end{corollary}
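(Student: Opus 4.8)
The plan is to read off a decision procedure for validity on orthoframes directly from the two main results. Given an arbitrary formula $\varphi$, pick a nominal $i$ not occurring in $\varphi$ and take $@_i \neg \varphi$ as a root formula, then build a tableau in $\tableauIB$ by applying the rules in some fixed exhaustive order, stopping each branch as soon as it is closed or saturated (the definition of \emph{saturated} makes this a decidable local test on a finite branch, since only finitely many rule instances need to be discharged). I would first argue that this construction halts: by the termination theorem of Section~\ref{subsecterm} every branch is finite, and the tableau is finitely branching because the only branching rule is $[\neg\land]$ (splitting in two) while every other rule adds only finitely many formulas at a time, and $\Irest$ and $[\reflexivity]$ contribute only finitely much because a finite branch contains only finitely many nominals. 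Hence, by K\"onig's lemma, the whole tableau is finite, so the procedure terminates after finitely many steps and outputs a finite saturated tableau.

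Once the finite saturated tableau is in hand, I would inspect it --- a decidable check, since it is a finite object --- to see whether every branch is closed. If it is, then $\varphi$ is provable in $\tableauIB$, and by soundness of the rules with respect to orthomodels (which is routine: each rule preserves truth in every orthomodel, e.g.\ $[\squaresym]$ by symmetry, $\Irest$ by irreflexivity, $[\reflexivity]$ because $@_i i$ is always true, and $\Drest$, being a restriction, cannot harm soundness) $\varphi$ is valid on all orthoframes. If some branch is open, then it is an open saturated branch, and the argument from the proof of Theorem~\ref{thmcompleteIB} --- the model existence lemma (Lemma~\ref{lemmodelexistIB}) followed by the bulldozing lemma (Lemma~\ref{lembulldozepreserve}) --- produces an orthomodel falsifying $\varphi$, so $\varphi$ is not valid. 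Thus the procedure answers ``valid'' exactly when $\varphi$ is valid on every orthoframe, and since validity and theoremhood coincide for $\hybridIB$ this establishes that $\hybridIB$ is decidable.

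The only genuinely delicate point is the halting argument, and it is already essentially done: the termination theorem gives finiteness of each branch, and finite branching together with K\"onig's lemma upgrades this to finiteness of the entire tableau. What remains is just to be careful that a reasonable exhaustive strategy really does saturate every non-closed branch in finitely many steps, which holds because on a finite branch there are only finitely many rule instances to process before condition i) or ii) of Definition~\ref{deftableau} is met. The rest is bookkeeping: noting that soundness is standard, and that the completeness direction is supplied verbatim by Theorem~\ref{thmcompleteIB}.
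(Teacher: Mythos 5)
Your proposal is correct and is essentially the argument the paper intends: the paper's own proof is a one-liner citing the termination property together with the finiteness of $\Kmodel^\Theta_B$, and your write-up simply fills in the standard details (finite branching plus K\"onig's lemma for finiteness of the whole tableau, routine soundness of the rules on orthomodels, and Theorem~\ref{thmcompleteIB} for the open-branch case). No gaps; you have just made explicit what the paper leaves implicit.
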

\begin{proof}
  From the termination property of $\tableauIB$ and the fact that $\Kmodel^\Theta_B$ is finite.
\end{proof}
Note that Takeuti and Sano \cite{takeuti2020} also showed the decidability of $\hybridIB$.

\section{Conclusion and Future Work}
\label{secconclusion}
In this paper, we constructed a tableau calculus that has the termination property and is complete with respect to all frames of undirected graphs. To prove completeness, the bulldozing method worked well in dealing with tableau calculus as well as Hilbert-style axiomatization.

The first task that comes to our mind as a continuation of this paper --- but with difficulty --- is to create a tableau calculus corresponding to $\mathbf{I4(@)}$. The bulldozing method for a transitive model was proposed by \cite[Section 4.5]{blackburn2002}. Furthermore, a method to realize transitivity was also proposed in \cite{bolander2009}. Combining these methods with the method described in this paper, it should be possible to construct a tableau calculus for the class of irreflexive and transitive frames.

Moreover, there is much remaining work to formulate tableau calculi corresponding to frames with various conditions. Many axioms characterizing some frame conditions were proposed in \cite{bolander2009}, such as anti-symmetry, trichotomy, and tree-like. If there is a relational structure that is widely studied, creating tableau calculus corresponding to it is worthwhile.

Another direction of future work is to add other operators. Besides the satisfaction operator $@$, hybrid logic can have more operators, such as the existential operator $E$ and the downarrow operator $\downarrow$. Moreover, hybrid graph logic in \cite{benevides2009} contains other modal operators $\square^+$ and $\dia^+$, and \cite{takeuti2020} introduced new modal operators $\square^*$ and $\dia^*$ to express the planarity.

\section*{Acknowledgements}

We would like to thank Prof. Ryo Kashima and Prof. Katsuhiko Sano for their invaluable advice in writing this paper.
The research of the first author was supported by JST SPRING, Grant Number JPMJSP2106.
The research of the second author was supported by Grant-in-Aid for JSPS Research Fellow Grant Number JP22KJ1483.

%
%
%
\bibliographystyle{splncs04}
\bibliography{logic}

\end{document}